\theoremstyle{change}%
\newtheorem{definition}{Definition:}[section]%
\newtheorem{proposition}[definition]{Proposition:}%
\newtheorem{theorem}[definition]{Theorem:}%
\newtheorem{lemma}[definition]{Lemma:}%
\newtheorem{corollary}[definition]{Corollary:}%
{\theorembodyfont{\rmfamily}\newtheorem{remark}[definition]{Remark:}}%
{\theorembodyfont{\rmfamily}}%
\newenvironment{proof}{{\bf Proof:}}
{\qquad \hspace*{\fill} $\Box$}%
\newcommand{\fg}{\mathfrak{g}}%
\newcommand{\fp}{\mathfrak{p}}%
\newcommand{\ad}{\operatorname{ad}}%
\newcommand{\tr}{\operatorname{tr}}%
\newcommand{\id}{\operatorname{id}}
\newcommand{\inner}{\operatorname{int}}%
\newcommand{\fix}{\operatorname{fix}}%
\newcommand{\rme}{\mathrm{e}}%
\newcommand{\EC}{\mathcal{E}}%
\newcommand{\RC}{\mathcal{R}}%
\newcommand{\AC}{\mathcal{A}}%
\newcommand{\DC}{\mathcal{D}}%
\newcommand{\NC}{\mathcal{N}}%
\newcommand{\HC}{\mathcal{H}}%
\newcommand{\VC}{\mathcal{V}}%
\newcommand{\N}{\mathbb{N}}%
\newcommand{\R}{\mathbb{R}}%
\begin{document}

\title{The chain recurrent set of flow of automorphisms on a decomposable Lie group}
\author{Adriano Da Silva\thanks{Supported by Proyecto UTA Mayor Nº 4781-24} \\
	Departamento de Matem\'atica,\\Universidad de Tarapac\'a - Sede Iquique, Chile \and Jhon Eddy Pariapaza Mamani \\
Programa Magister en Ciencias con Menci\'on Matem\'atica\\ Departamento de Matem\'atica,\\Universidad de Tarapac\'a - Arica, Chile.
}
\date{\today}
\maketitle

\begin{abstract}
	In this paper we show that the chain recurrent set of a flow of automorphisms on a connected Lie group coincides with the central subgroup of the flow, if the group is decomposable. Moreover, in the decomposable case, the flow satisfies the restriction property. Furthermore, the restriction of any flow of automorphisms to the connected component of the identity of its central subgroup is chain transitive.  
\end{abstract}

 {\small {\bf Keywords:} flow of automorphisms, chain recurrence, Lie groups}
	
	{\small {\bf Mathematics Subject Classification (2020): 22E15, 37C10, 37B20} }%

\section{Introduction}

The concept of chain recurrence set, introduced in the 1970s by C. Conley \cite{Conley}, has become a cornerstone in the study of dynamical systems, developed in the ensuing decades. This concept, defined by means of chains (or pseudo-orbits), has been successfully generalized in various directions not long after its introduction to analyze the dynamics of a broad variety of dynamical systems.

In the present paper, we focus on the recurrent set $\RC_C(\varphi)$ of a flow of automorphisms $\varphi:=\{\varphi_t\}_{t\in\R}$ on a connected Lie group $G$. By considering the Jordan decomposition of $\varphi$, 
$$\varphi=\varphi^{\HC}\circ\varphi^{\EC}\circ\varphi^{\NC},$$
into its hyperbolic $\varphi^{\HC}$, elliptic $\varphi^{\EC}$, and nilpotent $\varphi^{\NC}$ parts, we are able to show that the recurrence set of $\varphi$ coincides with the central subgroup $G^0$ of $\varphi$ (the set of fixed points of the hyperbolical part $\varphi^{\HC}$ of $\varphi$) when the flow decomposes $G$ (see definition below). In particular, the restriction property 
$$\RC_C(\varphi)=\RC_C\left(\varphi|_{\RC_C(\varphi)}\right),$$
holds under such assumptions. We start by showing that the recurrent set does not depend on the elliptical part, since it is a flow of isometries for some left-invariant metric. In sequence, by using the concept of uniform neighborhood, we are able to show that, in the decomposable case,  $\RC_C(\varphi)$ coincides with the recurrence set of the restriction $\varphi|_{G^0}$, which allows us to reduce our analysis to nilpotent flows of automorphisms on connected groups. By considering induced flows on homogeneous spaces, we are able to project and lift chains (under some topological assumptions), which allows us to show that nilpotent flows of automorphisms on connected groups are in fact chain transitive.

The paper is structured as follows: In Section 2, we introduce the Jordan decomposition of a flow of automorphisms at the group level and of its associated derivation at the algebra level. Such decomposition allows us to introduce the concept of unstable, central, and stable subgroups associated with a flow of automorphisms, which are important due to their strict relation with the dynamics of the flow (see \cite{DSAyGZ, DSAyPH, DS}). In Section 3, we define the concept of chains, the chain recurrent set, and the chain transitive set for the flow of automorphisms. Our definition only depends on the topology of the group; however, we show that it coincides with the standard metric definition of chains by considering left-invariant metrics. We also prove several general results relating the decompositions induced by the flow and its Jordan components with the chain recurrence set, which allow us to simplify the proof of our main result. In Section 4, we show our main result, namely that the recurrent set of a flow of automorphisms that decomposes the group coincides with the central subgroup. The proof is done first by considering nilpotent flows of automorphisms on solvable and semisimple Lie groups separately and using them to show the general case. Some concepts and technical results that are used in the main results are stated in an appendix, Section A.

{\bf Notations:} Let $G$ be a connected Lie group. We denote by $\VC_G$ the set of neighborhoods of the identity element $e\in G$.  For any subgroup $H\subset G$, we denote by $H_0$ the connected component of $H$ that contains the identity element $e\in G$. The left and right translations by an element $g\in G$ are denoted by $L_g$ and $R_g$, respectively. The conjugation by $g$ is the map $C_g:= L_g \circ R_{g^{-1}}$. The center $Z(G)$ of $G$ is the set of points satisfying $C_g=\id_G$. For any differentiable map $f:G\rightarrow H$, its differential at $x\in G$ is denoted by $(df)_x$.

\section{Preliminaries}

This section is devoted to presenting the main background needed to establish the mains results of the paper. 

\subsection{Decompositions at the algebra level}

Let $\DC$ be a linear map of $\fg$. We say that $\DC$ is {\bf nilpotent} if $\DC^n\equiv0$ for some $n\in\N$ and $\DC$ is {\bf elliptic} ({\bf resp. hyperbolic}) if it is semisimple and its eigenvalues are pure imaginary ({\bf resp. real}). The {\bf (additive) Jordan decomposition} of $\DC$ is given by the
$$\DC=\DC_{\HC}+\DC_{\EC}+\DC_{\NC}, \hspace{.5cm}\mbox{ where 
}\hspace{.5cm}\DC, \DC_{\HC}, \DC_{\EC}\mbox{ and }\DC_{\NC}\mbox{ commute, }$$
and $\DC_{\HC}$ is hyperbolic, $\DC_{\EC}$ is elliptic, and $\DC_{\NC}$ is nilpotent. Moreover, if $\DC$ is a derivation, $\DC_{\HC}, \DC_{\EC}$, and $\DC_{\NC}$ are also derivations. For $\lambda\in\R$ let us consider the eigenspace of $\DC_{\HC}$ given by
$$\mathfrak{g}_{\lambda}=\{X\in \mathfrak{g}:\mathcal{D}_{\HC}X=\lambda X\}, \hspace{.5cm}\mbox{ where }\hspace{.5cm}\fg_{\lambda}=\{0\} \mbox{ if }\lambda\mbox{ is not an eigenvalue of }\DC_{\HC}.
$$

Following \cite[Proposition 3.1]{SM1}, the  eigenspaces of $\DC_{\HC}$ satisfy  $[\fg_{\lambda}, \fg_{\mu}]\subset 
\fg_{\lambda +\mu }$ when $\lambda+\mu $ is an eigenvalue of $\mathcal{D}_{\HC}$ and zero otherwise. We define the {\bf unstable, central }and {\bf stable} subalgebras of $\fg$, respectively, by
\begin{equation*}
\mathfrak{g}^{+}=\bigoplus_{\lambda :\, \lambda>0}\mathfrak{g}_{\lambda },\hspace{1cm}\mathfrak{g}^{0}=\ker\DC_{\HC}\hspace{1cm}%
\mbox{ and }\hspace{1cm}\mathfrak{g}^{-}=\bigoplus_{\lambda:\, \lambda<0}\mathfrak{g}_{\lambda}.
\end{equation*}
Since $\DC_{H}$ is diagonalizable, $\fg$ decomposes as the direct sum $\mathfrak{g}=\mathfrak{g}^{+}\oplus \mathfrak{g}^{0}\oplus \mathfrak{g}^{-}$. Moreover, the previous properties imply that $\mathfrak{g}^{+},\mathfrak{g}^{0}$ and $\mathfrak{g}^{-}$ are  $\DC, \DC_{\HC}, \DC_{\EC}, \DC_{\NC}$-invariant Lie subalgebras of $\fg$ with $\mathfrak{g}^{+}$, $\mathfrak{g}^{-}$ nilpotent ones. 

\begin{remark}
\label{oldnotation}
   Since all the derivations $\DC, \DC_{\HC}, \DC_{\NC}, \DC_{\EC}$ commute, the subalgebras $\mathfrak{g}^{+}, \mathfrak{g}^{0}$, and $\mathfrak{g}^{-}$ coincide with the sum of the (generalized) eigenspaces of the derivation $\DC$ associated to eigenvalues with positive, zero, and negative real parts, respectively.
\end{remark}

\subsection{Decompositions at the group level}

Let $G$ be a connected Lie group with Lie algebra $\mathfrak{g}$ identified with the set of right-invariant vector fields on $G$.

A {\bf flow of automorphisms} of $G$ is a $1$-parameter subgroup $\varphi:=\{\varphi_t\}_{t\in\R}$ of $\mathrm{Aut}(G)$. Any flow of automorphism $\varphi$ is associated with a unique derivation $\DC:\fg\rightarrow\fg$ by the relation 
\begin{equation*}
\forall  t\in\R\hspace{.5cm}(d\varphi_{t})_{e}=\mathrm{e}^{t\mathcal{D}}, \label{derivativeonorigin}
\end{equation*}
where $\rme^{t\DC}$ stands for the matrix exponential. We say that a flow of automorphisms $\varphi$ is {\bf hyperbolic, elliptic,} or {\bf nilpotent} if its associated derivation $\DC$ is one of the respective types. By the results in \cite{DSAyPH}, the Jordan decomposition of $\DC$ implies the commutative decomposition of $\varphi$ (also called the Jordan decomposition) as
$$\forall t\in\R, \hspace{.5cm} \varphi_t=\varphi_t^{\HC}\circ\varphi_t^{\EC}\circ\varphi_t^{\NC},$$
where $\varphi^{\HC}, \varphi^{\EC}$, and $\varphi^{\NC}$ are hyperbolic, elliptical, and nilpotent flows of automorphisms, respectively.
Let us define the subgroup $G^{0}:=\fix\left(\varphi^{\HC}\right)$, whose Lie algebra is $\fg^0$. Moreover, let us denote by
$G^{+}$ and $G^{-}$ are the connected subgroups of $G$ whose Lie algebras are given by $\mathfrak{g}^{+}$ and $\mathfrak{g}^{-}$. The fact that $G^0$ normalizes $G^+$ and $G^-$ implies that $G^{\pm, 0}:=G^{\pm}G^0$ are Lie subgroups, whose Lie algebra are given by $\mathfrak{g}^{\pm, 0}:=\mathfrak{g}
^{\pm}\oplus \mathfrak{g}^{0}$. By \cite[Proposition 2.9]{DS} and Remark \ref{oldnotation}, all of the previous subgroups are $\varphi, \varphi^{\HC}, \varphi^{\EC}, \varphi^{\NC}$-invariant, closed, and have trivial intersection, that is,
$$G^+\cap G^-=G^+\cap G^{-, 0}=G^-\cap G^{+, 0}=\ldots=\{e\}.$$
Due to their dynamical importance, the subgroups $G^+, G^0$, and $G^-$ are called the {\bf unstable}, {\bf central}, and {\bf stable} subgroups of $\varphi$, respectively.

We say that $G$ is {\bf decomposable} by $\varphi$ if 
$$G=G^{+, 0}G^-=G^{-, 0}G^+.$$ 

Let us note that, since we are assuming $G$ to be connected, a necessary condition for decomposability is that $G^0$ is also connected. In particular, a semisimple Lie group $G$ is decomposable if and only if $G=G^0$ (see \cite{DSAyGZ}). Following \cite[Proposition 2.9]{DS}, a solvable Lie group $G$ is in fact decomposable by any flow of automorphisms. For general Lie groups, compactness of the central subgroup $G^0$ is a sufficient condition to assure decomposability (see \cite[Proposition 3.3]{DSAyGZ}). An important feature of decomposable groups is that any element can be uniquely decomposed into the product of factors in the stable, central, and unstable subgroups.




\begin{remark}
    \label{metricas}
By \cite[Theorem 2.4]{DSAyPH}, any elliptical flow $\varphi$ is a flow of isometries for some left-invariant metric $\varrho$ on $G$. In particular, there exists $V\in\VC_G$ such that $\varphi_t(V)=V$ for all $t\in\R$. On the other hand, if $\varphi$ is a flow whose associated derivation has only eigenvalues with real parts of the same sign, then for any invariant metric there exist $c, \lambda>0$ such that
$$\forall x, y\in G, t>0\hspace{.5cm}\varrho(\varphi_{\epsilon t}(x), \varphi_{\epsilon t}(y))\leq c\varrho(x, y),$$
where $\epsilon=1$ if the real parts of the eigenvalues of $\DC$ are all negative and $\epsilon=-1$ if they are all negative. In particular, for any compact neighborhood $V\in\VC_G$, there exists $\tau_0>0$ such that $\varphi_{\epsilon\tau}(V)\subset V$ for any $\tau\geq\tau_0$.  These standard remarks holds for instance for the restrictions of $\varphi$ to $G^+$ and $G^-$. Such fact will be useful ahead.
\end{remark}

\section{Chains and the recurrence set}

Let $G$ be a connected Lie group with Lie algebra $\fg$ identified with the set of right-invariant vector fields and consider $\varphi:=\{\varphi_t\}_{t\in\R}$ to be a flow of automorphisms. For $U\in\VC_G$, $\tau>0$ and $x, y\in G$. An {\bf $(U, \tau)$-chain} $\xi$ from $x$ to $y$ is given by a finite set $\xi:=\{n; x_0, \ldots, x_n, \tau_0, \ldots, \tau_{n-1}\}$ where $x_i\in G$, $\tau_i\geq \tau$,  
$$x_0=x, \hspace{.5cm}x_n=y\hspace{.5cm}\mbox{ and }\hspace{.5cm}\forall i=0, \ldots, n-1, \hspace{.5cm}x_{i+1}^{-1}\varphi_{\tau_i}(x_i)\in U.$$
The natural $n$ is said to be the number of {\bf jumps} of $\xi$, and $\tau(\xi):=\tau_0+\tau_1+\cdots\tau_{n-1}$ is the {\bf total time} of the chain $\xi$. If $\xi=\{n; x_0, \ldots, x_n, \tau_0, \ldots, \tau_{n-1}\}$ and $\xi'=\{m; y_0, \ldots, y_m, \tau'_0, \ldots, \tau'_{m-1}\}$ are $(U, \tau)$-chains such that $x_n=y_0$, we define the {\bf concatenation} of $\xi$ and $\xi'$ to be the chain
$$\xi\vee\xi':=\{n+m; x_0, \ldots, x_n=y_0, y_1, \ldots, y_m, \tau_0, \ldots, \tau_{n-1}, \tau'_0, \ldots, \tau'_{m-1}\}.$$
The {\bf chain limit set} of a point $x\in G$ is by definition the set
$$\Omega(x):=\bigcap_{U, \tau}\Omega(x, U, \tau), \hspace{.5cm}\Omega(x, U, \tau)=\{y\in G; \,\exists \,(U, \tau)\mbox{-chain from }x\mbox{ and }y\}.$$
A point $x\in G$ is said to be {\bf chain recurrent} if $x\in\Omega(x)$. The {\bf chain recurrent set } of the flow $\varphi$ is by definition the set
$$\RC_C(\varphi):=\{x\in G; \;x\in\Omega(x)\}.$$
In the set $\RC_C(\varphi)$ one can define the following equivalence relation
$$x\sim y\hspace{.5cm}\iff\hspace{.5cm} x\in\Omega(y)\hspace{.5cm}\mbox{ and }\hspace{.5cm}y\in\Omega(x).$$
The classes of the previous equivalence relation are called the {\bf chain transitive components} of $\RC_C(\varphi)$. A subset $F\subset G$ is said to be {\bf chain transitive} if it is contained in a chain transitive component of $\RC_C(\varphi)$. We say that $\varphi$ is {\bf chain transitive} if $G$ is chain transitive.

\begin{remark}
The advantage of the previous definition of chains is that it only considers the topology on $G$. However, one can recover the standard definition of chains by considering a left-invariant metric, as we show in the sequence:  Let $\varphi$ be a flow of automorphisms on $G$ and $\varrho$ a left-invariant metric on $G$. For any $\varepsilon, \tau>0$, a {\bf $(\varepsilon, \tau)$-metric chain} $\xi$ is given by the set $\xi:=\{n; x_0, \ldots, x_n, \tau_0, \ldots, \tau_{n-1}\},$ satisfying
$$\forall i=0, \ldots, n-1, \hspace{.5cm}\varrho(\varphi_{\tau_i}(x_i), x_{i+1})<\varepsilon.$$

The left-invariance of $\varrho$ implies that 
$$\varrho(\varphi_{\tau_i}(x_i), x_{i+1})<\varepsilon\hspace{.5cm}\iff\hspace{.5cm}\varrho(x_{i+1}^{-1}\varphi_{\tau_i}(x_i), e)<\varepsilon\hspace{.5cm}\iff\hspace{.5cm}x_{i+1}^{-1}\varphi_{\tau_i}(x_i)\in B(e, \varepsilon),$$
which implies that the chain limit and chain recurrent sets defined previously can be recovered by left-invariant metrics. In particular, when working on homogeneous spaces, we will use the standard metric concept.
\end{remark}

The next proposition shows many important properties of the chain recurrent set of a flow of automorphisms $\varphi$. They will be important in proving our main result.

\begin{proposition}
\label{properties}
    Let $\varphi$ be a flow of automorphisms. It holds:
\begin{enumerate}
    \item The chain limit sets are $\varphi$-invariant. Consequently, the same is true for any chain-transitive component of $\RC_C(\varphi)$;

\item If $\psi$ is an elliptical flow  satisfying
$$\varphi_t\circ\psi_s=\psi_s\circ\varphi_t, \hspace{.5cm}\forall t, s\in\R,$$
then 
$$\RC_C(\varphi)=\RC_C(\varphi\circ\psi).$$
In particular, $\RC_C(\varphi)=\RC_C(\varphi^{\HC, \NC})$

    \item Any $(U, \tau)$-chain from $x$ to $y$ with total time $T$ induces $(U, \tau)$-chains $\xi$ from $gx$ to $\varphi_T(g)y$ and $\xi'$ from $\varphi_{-T}(g)x$ to $gy$ with $\tau(\xi)=\tau(\xi')=T$. 

\item $\RC_C(\varphi)=\RC_C(\varphi^*)$, where $\varphi^*$ is the reverse flow of automorphisms defined as $\varphi^*_t=\varphi_{-t}$, $t\in\R$.

\end{enumerate}
\end{proposition}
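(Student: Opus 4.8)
The plan is to prove the four items in order, since each is a short structural argument and later items lean on the setup of earlier ones. Throughout I would fix a left-invariant metric $\varrho$ on $G$, so that I may freely use the metric description of chains from the preceding remark; this makes the translation arguments transparent.

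\textbf{Item 1.} First I would show $\Omega(x)$ is $\varphi$-invariant, i.e. $y\in\Omega(x)\Rightarrow\varphi_s(y)\in\Omega(x)$ for all $s\in\R$. Given a $(U,\tau)$-chain $\xi=\{n;x_0,\dots,x_n,\tau_0,\dots,\tau_{n-1}\}$ from $x$ to $y$, I would modify it near the endpoint: for $s>0$ large enough that $\tau_{n-1}+s\geq\tau$, replace the last leg by $\tau_{n-1}'=\tau_{n-1}+s$ and keep $x_n$ as is, obtaining a chain from $x$ to $\varphi_s(y)$ — wait, this only handles $s$ large; the cleaner move is to append one extra jump: set $x_{n+1}=\varphi_s(y)$ and time $\tau_n = s$ if $s\geq \tau$, which gives $x_{n+1}^{-1}\varphi_{\tau_n}(x_n) = \varphi_s(y)^{-1}\varphi_s(y)=e\in U$. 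For small or negative $s$, use that $\varphi$ is a flow: any $s\in\R$ can be written as $s=ms_0$ with $s_0\geq\tau$ for suitable integer $m$ if $s>0$; for $s\le 0$ one instead absorbs a translation into $\tau_0$ at the front, shrinking it — but $\tau_0$ may already be close to $\tau$. The robust fix is to observe that since $\bigcap_{U,\tau}$ is taken, for a fixed target one can always pad with enough full "laps": concretely, since $\varphi_{\tau}(y)\in \Omega(x)$ by the appending argument, and we may iterate, the orbit's forward part is in $\Omega(x)$; to get the whole orbit, append a jump of length $\tau_i$ landing at $\varphi_{s}(y)$ directly, which works for \emph{any} $s$ provided we allow $\tau_n\ge\tau$, and $s$ itself can be taken $\ge\tau$ after noting $\Omega(x)$ is closed and $\varphi$-orbits are connected. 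Once $\Omega(x)$ is $\varphi$-invariant, the equivalence classes (chain transitive components) are unions of orbits, hence $\varphi$-invariant.

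\textbf{Item 2.} By Remark \ref{metricas} there is $V\in\VC_G$ with $\psi_s(V)=V$ for all $s$. The key point: $\psi$ commutes with $\varphi$, so $(\varphi\circ\psi)_t = \varphi_t\circ\psi_t$, and one can compare an $(U,\tau)$-chain for $\varphi$ with one for $\varphi\circ\psi$ by absorbing the $\psi_t$-factor into a shrunk neighborhood. Precisely, given a $(U',\tau)$-chain $\{x_i,\tau_i\}$ for $\varphi\circ\psi$ with $U'\subset U$ chosen so that $\psi_s(U')\subset U$ for all $s$ (possible since $\psi_s$ preserves the $\varrho$-balls centered at $e$, by the isometry property applied with left-invariance), replace $x_{i+1}$ by $\psi$-corrected points to obtain a $(U,\tau)$-chain for $\varphi$, and symmetrically in the other direction. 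I expect this to be the \emph{main obstacle}: keeping track of how the cumulative $\psi$-drift interacts with the jump points, since the $\psi_{\tau_i}$ are different at each step. The clean way is: $\psi$ being an isometry for $\varrho$ means $\varrho(\psi_t a,\psi_t b)=\varrho(a,b)$; combined with left-invariance, $x_{i+1}^{-1}\varphi_{\tau_i}\psi_{\tau_i}(x_i)\in U$ can be rewritten, after left-translating, in terms of $\varrho$-distance, and the $\psi$-factor can be peeled off at the cost of replacing the chain points $x_i$ by $\psi_{-(\tau_0+\cdots+\tau_{i-1})}(x_i)$, which is a legitimate relabeling since $\psi$ is a homeomorphism of $G$. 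The final sentence $\RC_C(\varphi)=\RC_C(\varphi^{\HC,\NC})$ is then immediate by taking $\psi=\varphi^{\EC}$, which is elliptical and commutes with $\varphi^{\HC}\circ\varphi^{\NC}$ by the Jordan decomposition.

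\textbf{Item 3.} This is a direct computation. Given $\xi=\{n;x_0,\dots,x_n,\tau_0,\dots,\tau_{n-1}\}$ from $x$ to $y$ with total time $T$, define $y_i := \varphi_{T_i}(g)\,x_i$ where $T_i := \tau_0+\cdots+\tau_{i-1}$ (so $T_0=0$, $T_n=T$); then $y_0=gx$, $y_n=\varphi_T(g)y$, and
$$y_{i+1}^{-1}\varphi_{\tau_i}(y_i) = x_{i+1}^{-1}\varphi_{T_{i+1}}(g)^{-1}\varphi_{\tau_i}\!\big(\varphi_{T_i}(g)x_i\big) = x_{i+1}^{-1}\varphi_{T_{i+1}}(g)^{-1}\varphi_{T_{i+1}}(g)\,\varphi_{\tau_i}(x_i) = x_{i+1}^{-1}\varphi_{\tau_i}(x_i)\in U,$$
using that $\varphi_{\tau_i}$ is an automorphism and $T_{i+1}=T_i+\tau_i$. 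The times are unchanged, so $\tau(\xi)=T$. The chain $\xi'$ from $\varphi_{-T}(g)x$ to $gy$ is obtained the same way with $g$ replaced by $\varphi_{-T}(g)$, or equivalently by left-translating the original chain on the right.

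\textbf{Item 4.} I would show directly that $\xi=\{n;x_0,\dots,x_n,\tau_0,\dots,\tau_{n-1}\}$ is a $(U,\tau)$-chain for $\varphi$ from $x$ to $y$ if and only if the reversed-and-relabeled data $\tilde\xi := \{n;\,\varphi_{\tau_{n-1}}(x_{n-1}),\dots\}$ forms a $(U',\tau)$-chain for $\varphi^*$ from (something near) $y$ to $x$, for a suitably shrunk $U'$. The natural candidate: from the condition $x_{i+1}^{-1}\varphi_{\tau_i}(x_i)\in U$, set $z_i := \varphi_{\tau_i}(x_i)$ so that $z_i \in x_{i+1}U$, hence $x_i = \varphi^*_{\tau_i}(z_i)$ and $z_{i-1}^{-1}\varphi^*_{\tau_i}$-type relations reverse the chain. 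One must handle the $U\to U^{-1}$ asymmetry and the boundedness of $\varphi_{\tau_i}(U)$: but since $\tau_i\ge\tau$ can be arbitrarily large this requires care — the right move is to split each long leg into unit-time sub-legs (legitimate: concatenate trivial sub-chains with $x_i'=\varphi_{k}(x_i)$), reducing to legs of bounded length on which $\varphi_{\tau_i}$ distorts $U$ by a controlled amount, i.e. $\varphi_{\tau_i}(U')\subset U$ for appropriate $U'$. Taking the intersection over all $U,\tau$ then gives $\Omega_\varphi(x,y) $ reversed equals $\Omega_{\varphi^*}(y,x)$ at the level of recurrence, whence $\RC_C(\varphi)=\RC_C(\varphi^*)$. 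Alternatively — and this is cleaner — one can invoke Item 3 to move all translation errors to the endpoints, reducing item 4 to the statement that reversing a chain's legs reverses the chain, which is then a formal manipulation. I expect item 4's only subtlety to be, again, the asymmetric distortion of $U$ under $\varphi_{\tau_i}$ for large $\tau_i$, handled by subdivision of legs as above.
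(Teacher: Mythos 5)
Items 3 and 4 are essentially the paper's arguments (the computation in item 3 is exactly right, and your subdivision-of-long-legs idea in item 4 is the same device the paper uses, choosing $V=V^{-1}$ with $\varphi_t(V)\subset U$ for $t\in[-2\tau,0]$ and padding so that $\tau_i\in[\tau,2\tau]$). Items 1 and 2, however, each have a genuine gap.

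In item 1 you actually write down the correct move --- replace the last time $\tau_{n-1}$ by $\tau_{n-1}+s$ and the endpoint $y$ by $\varphi_s(y)$ --- but then discard it and end with a non-argument. The fix for ``$s$ small or negative'' is not to append jumps or to invoke closedness of $\Omega(x)$ plus connectedness of orbits (a closed set can contain a forward half-orbit without containing the full orbit, so that step is a non sequitur). The fix is to work backwards through the quantifiers: given $U$ and $\tau$, first choose $V\in\VC_G$ with $\varphi_s(V)\subset U$, and demand a $(V,\tau')$-chain from $x$ to $y$ with $\tau'\geq\tau+|s|$; then $\tau_{n-1}+s\geq\tau$ automatically, and the modified last jump lands in $\varphi_s(V)\subset U$. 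Note also that you must change $x_n$ to $\varphi_s(y)$, not ``keep $x_n$ as is.'' Without this quantifier bookkeeping the invariance is only established for $s\geq\tau$, which, after intersecting over all $\tau>0$, gives nothing.

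In item 2 the gap is more serious. Your relabeling $x_i\mapsto\psi_{-(\tau_0+\cdots+\tau_{i-1})}(x_i)$ does convert the intermediate jump conditions correctly (using a $\psi$-invariant neighborhood $V$), but it also relabels the \emph{endpoint}: a closed $(V,\tau)$-chain from $x$ to $x$ for one flow becomes a chain from $x$ to $\psi_{\pm T}(x)$ for the other, where $T$ is the total time. Recurrence is therefore not transferred by the relabeling alone. The paper closes this gap with a second, non-formal step: concatenate $p$ shifted copies of the transformed chain to reach $\psi_{p\widehat{\tau}}(x)$, use precompactness of the elliptic orbit $\{\psi_{s}(x)\}$ to find $n_k$ with $x^{-1}\psi_{(n_k-n_{k_0})\widehat{\tau}}(x)\in V^{-1}V\subset U$, and append one final jump of length $\widehat{\tau}$ to return to $x$. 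This almost-periodicity argument is the actual content of item 2, and your proposal omits it entirely.
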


\begin{proof}
    1. Let $y\in \Omega(x)$, $T\in\R$, $U\in\mathcal{V}_G$, and $\tau>0$. Since $e\in G$ is a fixed point of $\varphi_T$, there exists $V\in\VC_G$ such that $V\cup\varphi_T(V)\subset U$. Let $\tau'\geq\max\{\tau\pm T, \tau\}$ and consider a $(V, \tau')$-chain $\xi$ from $x$ to $y$. If $\xi:=\{n; x_0, \ldots, x_n, \tau_0, \ldots, \tau_{n-1}\}$, then 
    $$ \varphi_{-T}\left(\varphi_T(y)\varphi_{\tau'+T}(x_{n-1})\right)=y^{-1}\varphi_{\tau_{n-1}}(x_{n-1})\in V\hspace{.5cm}\implies\hspace{.5cm}\varphi_T(y)\varphi_{\tau'+T}(x_{n-1})\in \varphi_T(V)\subset U,$$
showing that $\{n; x_0, \ldots, \varphi_T(y), \tau_0, \ldots, \tau_{n-1}+T\}$ is an $(U, \tau)$-chain from $x$ to $\varphi_T(y)$, and hence, $\varphi_T(\Omega(x))\subset\Omega(x)$, implying that $\Omega(x)$ is $\varphi$-invariant.

    \bigskip

2. Let $x\in\RC_C(\varphi)$, $U\in\VC_G$, and $\tau>0$ be arbitrary. Consider $V\in\VC_G$ satisfying 
$$V^{-1}V\subset U\hspace{.5cm}\mbox{ and }\hspace{.5cm} \psi_{\tau}(V)\subset V, \hspace{.5cm}\forall \tau\in\R,$$
where the first inclusion is assured by the continuity of the product and the inversion, and the second one by the fact that $\mathcal{Y}$ is elliptic.

Let us consider a $(V, \tau; \varphi)$-chain $\xi$ from $x$ to itself given by $\xi=\{n;x_0, \ldots, x_n, \tau_0, \ldots, \tau_{n-1}\}$. Defining, 
$$y_0=x \hspace{.5cm}\mbox{ and }\hspace{.5cm} y_i:= \psi_{\tau_0+\tau_1+\cdots+\tau_{i-1}}(x_i), \hspace{.5cm} i=1, \ldots, n,$$
we get that
$$y_{i+1}^{-1}(\varphi\circ\psi)_{\tau_i}(y_i)=\psi_{\sum_{j=0}^{i}\tau_j}(x_{i+1}^{-1})\varphi_{\tau_i}\left(\psi_{\tau_i}\left(\psi_{\sum_{j=0}^{i-1}\tau_j}(x_i)\right)\right)=\psi_{\sum_{j=0}^{i}\tau_j}(x_{i+1}^{-1})\varphi_{\tau_i}\left(\psi_{\sum_{j=0}^{i}\tau_j}(x_i)\right)$$
$$=\psi_{\sum_{j=0}^{i}\tau_j}(x_{i+1}^{-1})\psi_{\sum_{j=0}^{i}\tau_j}(\varphi_{\tau_i}(x_i))=\psi_{\sum_{j=0}^{i}\tau_j}\left(x_{i+1}^{-1}\varphi_{\tau_i}(x_i)\right)\in \psi_{\sum_{j=0}^{i}\tau_j}\left(V\right)\subset V,$$
showing that $\widehat{\xi}=\{n; y_0, \ldots, y_n, \tau_0, \ldots, \tau_{n-1}\}$ is a $(V, \tau; \varphi\circ\psi)$-chain from $x$ to $\psi_{\widehat{\tau}}(x)$, where $\widehat{\tau}=\tau(\xi)$ is the total time of $\xi$. Moreover, since $V$ is $\psi$-invariant, for any $\tau'\in\R$, the set $\psi_{\tau'}(\widehat{\xi}):=\{n; \psi_{\tau'}(y_0), \ldots, \psi_{\tau'}(y_n), \tau_0, \ldots, \tau_{n-1}\}$ is a $(V, \tau; \varphi\circ\psi)$-chain from $\psi_{\tau'}(x)$ to $\psi_{\tau'+\widehat{\tau}}(x)$. In particular, for any $p\geq 1$ the concatenation
$$
\widehat{\xi}\vee\psi_{\widehat{\tau}}(\widehat{\xi})\vee\cdots\vee\psi_{(p-1)\widehat{\tau}}(\widehat{\xi}),
$$
is a $(V, \tau; \varphi\circ\psi)$-chain from  $x$ for $\psi_{p\widehat{\tau}}(x)$.

On the other hand, the ellipticity of $\mathcal{Y}$ implies the existence of a subsequence satisfying
$$\psi_{n_k\widehat{\tau}}(x)\rightarrow \widehat{x}, \hspace{.5cm}\mbox{ for some }\hspace{.5cm}\widehat{x}\in G.$$ 
Hence, there exists $k_0\in\N$ such that $\psi_{n_k\widehat{\tau}}(x)\in \widehat{x}V$ for all $k>k_0$, and so
$$\psi_{n_{k_0}\widehat{\tau}}(x^{-1}\psi_{(n_k-n_{k_0})\widehat{\tau}}(x))= \psi_{n_{k_0}\widehat{\tau}}(x)^{-1}\psi_{n_k\widehat{\tau}}(x)\in (\widehat{x}V)^{-1}(\widehat{x}V)\subset V^{-1}V,$$
implying that
$$\forall k>k_0, \hspace{.5cm}x^{-1}\psi_{(n_k-n_{k_0})\widehat{\tau}}(x)\in\psi_{-n_{k_0}\widehat{\tau}}(V^{-1}V)= V^{-1}V\subset U.$$
Therefore, if $k\in\N$ is such that $p:=n_k-n_{k_0}-1>0$, the set 
$$\widehat{\xi}\vee\psi_{\widehat{\tau}}(\widehat{\xi})\vee\cdots\vee\psi_{p\widehat{\tau}}(\widehat{\xi})\vee \{\psi_{p\widehat{\tau}}(x), x, \widehat{\tau}\},$$
is a $(U, \tau; \varphi\circ\psi)$-chain from $x$ to $x$. Since $U$ and $\tau>0$ are arbitrary, we conclude that
$$x\in\RC_{C}(\varphi\circ\psi )\hspace{.5cm}\mbox{ and hence }\hspace{.5cm}\RC_{C}(\varphi)\subset \RC_{C}(\varphi\circ\psi).$$
Since the reverse flow $\psi^*_t:=\psi_{-t}$ is elliptical and satisfies
$$(\varphi\circ\psi)_t\circ\psi^*_s=\psi^*_s\circ(\varphi\circ\psi)_t, \hspace{.5cm}\forall t, s\in\R $$
the previous calculations allow us to conclude that
$$\RC_{C}(\varphi\circ\psi)\subset \RC_{C}((\varphi\circ\psi)\circ\psi^*)=\RC_{C}(\varphi),$$
concluding the proof.
\bigskip

3. Let $\{n; x_0, \ldots, x_n, \tau_0, \ldots, \tau_{n-1}\}$ be a $(U, \tau)$-chain from $x$ to $y$ with total time $T$.  Set $\tau_{-1}:=0$ and, for $i=0, \ldots, n$, consider the points 
$$g_i=\varphi_{\tau_0+\ldots+\tau_{i-1}}(g)x_i, \hspace{.5cm} i=1, \ldots, n.$$
Then, 
$$\forall i=0, \ldots, n-1, \hspace{.5cm}g_{i+1}^{-1}\varphi_{\tau_i}(g_i)=\left(\varphi_{\tau_0+\ldots+\tau_{i}}(g)x_{i+1}\right)^{-1}\varphi_{\tau_i}\left(\varphi_{\tau_0+\ldots+\tau_{i-1}}(g)x_i\right)$$
$$=\left(x_{i+1}^{-1}\varphi_{\tau_0+\ldots+\tau_{i}}(g)^{-1}\right)\varphi_{\tau_0+\ldots+\tau_{i}}(g)\varphi_{\tau_i}(x_i)=x_{i+1}^{-1}\varphi_{\tau_i}(x_i)\in U,$$
showing that $\xi=\{n; g_0, \ldots, g_n, \tau_0, \ldots, \tau_{n-1}\}$ is a $(U, \tau)$-chain from $gx$ to $\varphi_{T}(x)$ with total time $T$. Analogously, if we set 
$$g'_i:=\varphi_{-\tau_i-\ldots-\tau_{n-1}}(g)x_i, \hspace{.5cm}i=0, \ldots, m-1\hspace{.5cm}\mbox{ and }\hspace{.5cm}g'_m=gx,$$
we get that 
$$(g'_{i+1})^{-1}\varphi_{\tau_i}(g'_i)=\left(\varphi_{-\tau_{i+1}-\ldots-\tau_{n-1}}(g)x_{i+1}\right)^{-1}\varphi_{\tau_i}\left(\varphi_{-\tau_i-\ldots-\tau_{n-1}}(g)x_i\right)=x_{i+1}\varphi_{\tau_i}(x_i)\in U,$$
and hence, $\xi'=\{n; g'_0, \ldots, g'_n, \tau_0, \ldots, \tau_{n-1}\}$ is a $(U, \tau)$-chain from $\varphi_{-T}(g)x$ to $gy$ with total time $T$.


\bigskip

4. It is a direct consequence of the fact that
$$y\in\Omega(x)\hspace{.5cm}\iff\hspace{.5cm}x\in\Omega^*(y),$$
where $\Omega^*(y)$ is the chain limit set of the point $y\in G$, with respect to the flow $\varphi^*$.

In fact, let $U\in\mathcal{V}$ and $\tau>0$ and consider $V\in\mathcal{V}$ to be a neighborhood of the identity satisfying:
$$V=V^{-1}\hspace{.5cm}\mbox{ and }\hspace{.5cm} \varphi_t(V)\subset U, \hspace{.5cm}\forall t\in [- 2\tau, 0].$$
If $y\in\Omega(x)$, there exists a $(V, \tau)$-chain $\xi=\{n; x_0, \ldots, x_n, \tau_0, \ldots, \tau_{n-1}\}$ where $x_i\in G$, $\tau_i\geq \tau$, satisfy
$$x_0=x, \hspace{.5cm}x_n=y\hspace{.5cm}\mbox{ and }\hspace{.5cm}\forall i=0, \ldots, n-1, \hspace{.5cm}x_{i+1}^{-1}\varphi_{\tau_i}(x_i)\in V.$$  
By adding trivial jumps if necessary, allow us to assume that $\tau_i\in [\tau, 2\tau]$, and hence,
$$x_{i+1}^{-1}\varphi_{\tau_i}(x_i)\in V\hspace{.5cm}\iff \hspace{.5cm}\varphi_{\tau_i}\left(x_i^{-1}\varphi_{-\tau_i}(x_{i+1})\right)^{-1}\in V\hspace{.5cm}\implies\hspace{.5cm}x_i^{-1}\varphi^*_{\tau_i}(x_{i+1})\in \varphi_{-\tau_i}(V^{-1})\subset U.$$
Therefore, if $y_i:=x_{n-i}$, the set $\{n; y_0, \ldots, y_n, \tau_0, \ldots, \tau_{n-1}\}$ is an $(U, \tau)$-chain from $y$ to $x$ for the flow $\varphi^*$, showing that
$$y\in\Omega(x)\hspace{.5cm}\implies\hspace{.5cm}x\in\Omega^*(y).$$
In an analogous way, one shows the reverse implication, which proves the item.
\end{proof}

\subsection{Flows on homogeneous spaces}

A very useful tool in the study of the dynamics of flows of automorphisms is their projection to homogeneous spaces. Let $H\subset G$ be a closed subgroup and $\varphi$ a flow of automorphisms on $G$. By \cite[Proposition 4]{JPh1}, the flow $\varphi$ factors to a flow $\widehat{\varphi}$ on the homogeneous space $G/H$ if and only if $H$ is $\varphi$-invariant. In this case, $\widehat{\varphi}$ is completely determined by the relation
$$\forall t\in\R, \hspace{.5cm}\pi\circ\varphi_t=\widehat{\varphi}_t\circ\pi,$$
where $\pi:G\rightarrow G/H$ is the canonical projection.

The next result gives us a way to relate chains of $\varphi$ with the ones of $\widehat{\varphi}$. It will be essential in order to analyze the chain recurrent set of $\varphi$.

\begin{lemma}
\label{quotient}
    Let $H\subset G$ be a closed, $\varphi$-invariant Lie subgroup. It holds:

    \item[1.] If the pair $(H, G)$ satisfies
    \begin{equation}
        \label{homo0}
    \forall \varepsilon>0, \hspace{.3cm}\;\exists \,U\in \VC_G; \hspace{.5cm}\forall \,x\in G, \hspace{.5cm}\;\pi(xU)\subset B(\pi(x), \varepsilon),
        \end{equation}
    then $\pi(\RC_C(\varphi))\subset\RC_C(\widehat{\varphi})$
    
    \item[2.] If the pair $(H, G)$ satisfies
    \begin{equation}
        \label{homo}
    \forall U\in \VC_G, \hspace{.3cm}\;\exists \,\varepsilon>0; \hspace{.5cm}\forall \,x\in G, \hspace{.5cm}\;B(\pi(x), \varepsilon)\subset \pi(Ux),
        \end{equation}
then:
        \begin{itemize}
            \item[2.1.] If $\pi(y)\in\Omega(\pi(x), \widehat{\varphi})$, then for any $U\in\VC_G$, $\tau>0$ there exists $h\in H$ such that $hy^{-1}\in \Omega(x^{-1}, U, \tau, \varphi)$;

            \item[2.2.] If $\widehat{\varphi}$ is chain transitive and $H_0$ is a chain transitive subset, then $\varphi$ is chain transitive.
        \end{itemize}

\end{lemma}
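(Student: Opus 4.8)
The plan is to treat the two parts separately, since they go in opposite directions (projecting chains down, respectively lifting chains up), but both rest on the same mechanism: translating the $(U,\tau)$-chain condition $x_{i+1}^{-1}\varphi_{\tau_i}(x_i)\in U$ into the metric condition $\varrho(\widehat\varphi_{\tau_i}(\pi(x_i)),\pi(x_{i+1}))<\varepsilon$ downstairs (and back), using that $\pi$ intertwines $\varphi$ and $\widehat\varphi$ and that the chain recurrent/limit sets are insensitive to whether one uses topological neighborhoods or metric balls (the Remark on left-invariant metrics). For part 1, given $x\in\RC_C(\varphi)$ and a target $\varepsilon>0$ for a metric chain of $\widehat\varphi$ from $\pi(x)$ to $\pi(x)$, I would first use \eqref{homo0} to pick $U\in\VC_G$ with $\pi(xU)\subset B(\pi(x),\varepsilon)$ for \emph{all} $x\in G$; then take any $(U,\tau)$-chain $\{n;x_0,\dots,x_n,\tau_0,\dots,\tau_{n-1}\}$ of $\varphi$ from $x$ to $x$, and observe that $x_{i+1}^{-1}\varphi_{\tau_i}(x_i)\in U$ gives $\varphi_{\tau_i}(x_i)\in x_{i+1}U$, hence $\widehat\varphi_{\tau_i}(\pi(x_i))=\pi(\varphi_{\tau_i}(x_i))\in\pi(x_{i+1}U)\subset B(\pi(x_{i+1}),\varepsilon)$. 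So $\{n;\pi(x_0),\dots,\pi(x_n),\tau_0,\dots,\tau_{n-1}\}$ is an $(\varepsilon,\tau)$-metric chain of $\widehat\varphi$ from $\pi(x)$ to $\pi(x)$; since $\varepsilon,\tau$ are arbitrary, $\pi(x)\in\RC_C(\widehat\varphi)$.

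For part 2.1, I would run the reverse translation: given $\pi(y)\in\Omega(\pi(x),\widehat\varphi)$ and $U\in\VC_G$, $\tau>0$, first shrink $U$ to a symmetric $V$ with $V^{-1}V\subset U$ (or argue directly), use \eqref{homo} to get $\varepsilon>0$ with $B(\pi(z),\varepsilon)\subset\pi(Uz)$ for all $z$, and take an $(\varepsilon,\tau)$-metric chain of $\widehat\varphi$ from $\pi(x)$ to $\pi(y)$, say through $\pi(z_0)=\pi(x),\dots,\pi(z_n)=\pi(y)$. The condition $\varrho(\widehat\varphi_{\tau_i}(\pi(z_i)),\pi(z_{i+1}))<\varepsilon$ means $\pi(\varphi_{\tau_i}(z_i))\in B(\pi(z_{i+1}),\varepsilon)\subset\pi(U z_{i+1})$, so there is $u_i\in U$ with $\pi(\varphi_{\tau_i}(z_i))=\pi(u_i z_{i+1})$, i.e. $\varphi_{\tau_i}(z_i)\in u_i z_{i+1} H$. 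Here is where the bookkeeping gets delicate: one must successively correct the lifts $z_{i+1}$ by elements of $H$ — replacing $z_{i+1}$ by $h_{i+1}z_{i+1}$ for suitable $h_{i+1}\in H$ — so that the accumulated $H$-error is carried along, producing an honest $(U,\tau)$-chain of $\varphi$ from $x$ to some $hy$ with $h\in H$ (using $\varphi$-invariance of $H$ to push the corrections through $\varphi_{\tau_i}$). Reading this off as a chain for the reverse flow $\varphi^*$ on inverses, exactly as in the proof of Proposition \ref{properties}(4), gives $hy^{-1}\in\Omega(x^{-1},U,\tau,\varphi)$; one should double-check which of $y$, $y^{-1}$, $x$, $x^{-1}$ the statement wants and align the inversion step accordingly.

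For part 2.2, assume $\widehat\varphi$ is chain transitive and $H_0$ is chain transitive for $\varphi$; I want: given any $x,y\in G$, $U\in\VC_G$, $\tau>0$, a $(U,\tau)$-chain of $\varphi$ from $x$ to $y$. Since $\widehat\varphi$ is chain transitive, $\pi(y)\in\Omega(\pi(x),\widehat\varphi)$, so by 2.1 (applied with a suitably small neighborhood) there is $h\in H$ and a $(U',\tau)$-chain of $\varphi$ from $x$ to $hy$ for any prescribed $U'$; combining with Proposition \ref{properties}(3) to translate endpoints, I reduce to connecting $hy$ to $y$, i.e.\ to producing a chain from $hy$ to $y$ inside the coset $yH$ — equivalently, a chain from $h$ to $e$ for the flow $\varphi$ restricted appropriately to $H$. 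Because $G$ is connected, $H$ need not be, but the chain from $x$ to $hy$ can be arranged (by continuity of $\varphi_t$ and connectedness of $G$, inserting a small jump) to land in the \emph{identity component} coset $yH_0$; then chain transitivity of $H_0$ supplies a $(U,\tau)$-chain from $h'\in H_0$ to $e$, which translates (via Proposition \ref{properties}(3) and $\varphi$-invariance of $H_0$) to a chain from $h'y$ to $y$. Concatenating the two pieces yields a $(U,\tau)$-chain from $x$ to $y$; since $x,y,U,\tau$ are arbitrary, $\varphi$ is chain transitive. The main obstacle throughout is part 2.1: the careful propagation of the $H$-valued corrections so that local "errors modulo $H$" assemble into a single global element $h\in H$ without blowing up the neighborhood $U$, and making the reduction to $H_0$ (rather than all of $H$) in 2.2 rigorous using connectedness of $G$.
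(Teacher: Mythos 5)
Part 1 of your plan matches the paper's proof essentially verbatim, so there is nothing to add there. The problems are in part 2. For 2.1, your core idea (inductively correcting the lifts by elements of $H$ and pushing the corrections through $\varphi_{\tau_i}$ using the $\varphi$-invariance of $H$) is the right one, but the plan to first produce a forward $(U,\tau)$-chain of $\varphi$ ``from $x$ to $hy$'' and only afterwards pass to inverses does not go through. What (\ref{homo}) gives you is $x_{i+1}\in U\varphi_{\tau_i}(x_i)H$: the error $U$ sits on the \emph{left}, whereas the chain condition $x_{i+1}^{-1}\varphi_{\tau_i}(x_i)\in U$ needs it as a right factor; moving it across conjugates $U$ by an unbounded group element, so no choice of $H$-corrections closes the induction for a forward chain through the $x_i$ themselves. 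This is exactly why the statement of 2.1 concerns $x^{-1}$ and $hy^{-1}$: the paper sets $y_i:=(x_ih_i)^{-1}$ from the outset, so that inversion flips the left error into a right error, and only then does the inductive choice $h_{k+1}:=h'\varphi_{\tau_k}(h_k)$ work. The ``reverse flow on inverses'' step you invoke from Proposition \ref{properties}(4) is a different operation (it reverses the order of the chain; it does not invert the group elements), so it cannot repair a forward chain from $x$ to $hy$ that you cannot construct in the first place.

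The more serious gap is in 2.2. Your argument hinges on the claim that the correction $h\in H$ supplied by 2.1 ``can be arranged, by continuity and connectedness, inserting a small jump, to land in the identity component coset.'' Nothing in 2.1 controls which connected component of $H$ the element $h$ lies in, and the components of a closed subgroup are separated, so no small jump fixes this; this reduction from $H$ to $H_0$ is precisely the hard point of the lemma. The paper handles it with a global connectedness argument: it defines $f(x)=\inf\{\varrho(h,e):\ h\in H,\ hx\in\Omega(e,U,\tau)\}$, shows — using chain transitivity and $\varphi$-invariance of $H_0$ together with Proposition \ref{properties}(3) — that $f$ equals the locally constant value $\varrho(h_{\nu},e)$ on the open sets $G_{\nu}=\{x:\ h_{\nu}x\in\Omega(e,U,\tau)\}$ indexed by the components $H_{\nu}$ of $H$, and then uses connectedness of $G$ to force $f\equiv f(e)=0$, i.e. $G=\Omega(e,U,\tau)$. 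A secondary flaw in your 2.2: chains are not preserved by right translation, since $(h_{i+1}y)^{-1}\varphi_{\tau_i}(h_iy)=y^{-1}\bigl(h_{i+1}^{-1}\varphi_{\tau_i}(h_i)\bigr)\varphi_{\tau_i}(y)$ need not lie in $U$, and Proposition \ref{properties}(3) only yields chains from $gx$ to $\varphi_T(g)y$ or from $\varphi_{-T}(g)x$ to $gy$, neither of which is the chain ``from $h'y$ to $y$'' your concatenation requires. The paper avoids this by anchoring everything at the identity and absorbing the $H_0$-correction there.
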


\begin{proof} 1. Let $x\in\RC_C(\varphi)$ and $\tau, \varepsilon>0$. Let $U\in\VC_G$ satisfying property (\ref{homo0}) and consider an $(U, \tau)$-chain $\xi=\{n; x_0, \ldots, x_n, \tau_0, \dots, \tau_{n-1}\}$ is from $x$ to $x$. Then,  
$$x_{i+1}^{-1}\varphi_{\tau_i}(x_i)\in U\hspace{.5cm}\iff\hspace{.5cm}\varphi_{\tau_i}(x_i)\in x_{i+1}U\hspace{.5cm}\implies\hspace{.5cm}\widehat{\varphi}_{\tau_i}(\pi(x_i))=\varphi_{\tau_i}(x_i))\in\pi(x_{i+1}U)\subset B(\pi(x), \varepsilon),$$
showing that $\pi(\xi):=\{n; \pi(x_0), \ldots, \pi(x_n), \tau_0, \dots, \tau_{n-1}\}$ is a $(\varepsilon, \tau)$-chain of the induced flow $\widehat{\varphi}$ on $G/H$. Since $\varepsilon, \tau>0$ were arbitrary, we conclude that 
$x\in\RC_C(\widehat{\varphi})$, showing the result.

\bigskip

2.1. Let $U\in \VC_G$ and $\tau>0$, and consider $\varepsilon>0$ satisfying property (\ref{homo}) for $U$. Since $\pi(y)\in\Omega(\pi(x), \widehat{\varphi})$, there exists a $(\varepsilon, \tau; \widehat{\varphi})$-chain $\{n; \widehat{x}_0, \ldots, \widehat{x}_n;  \tau_0, \ldots, \tau_{n-1}\}$ from $\pi(x)$ to $\pi(y)$. In particular, 
$$\widehat{x}_{i+1}\in B(\widehat{\varphi}_{\tau_i}(\widehat{x}_i), \varepsilon)\hspace{.5cm}\implies\hspace{.5cm} \pi(x_{i+1})\in\pi(U\varphi_{\tau_i}(x_i))\hspace{.5cm}\implies\hspace{.5cm} x_{i+1}\in U\varphi_{\tau_i}(x_i)H,$$
where $\pi(x_i)=\widehat{x}_{i}\in G$. Let us define a $(U, \tau)$-chain as follows: $y_0:=x^{-1}$. For $i=0$, there exists by the previous an element $h_1\in H$ such that 
$$ x_1\in U\varphi_{\tau_0}(x)h_1^{-1}\hspace{.5cm}\iff\hspace{.5cm}
 (x_1h_1)\varphi_{\tau_0}(x^{-1})\in U,$$
and we define $y_1:=(x_1h_1)^{-1}$. Let $k\in\{2, \ldots, n-1\}$ and assume that we assured the existence of $h_i\in H$ such that
$$y_i=(x_ih_i)^{-1}\hspace{.5cm}\mbox{ satisfies }\hspace{.5cm} y_{i+1}^{-1}\varphi_{\tau_i}(y_i)\in U, \hspace{.5cm} i=1, \ldots k-1.$$
Let us define $y_{k+1}$ as follows: Since $x_{k+1}\in U\varphi_{\tau_{k}}(x_{k})H$, there exists $h'\in H$ such that
$$x_{k+1}h'\varphi_{\tau_{k}}(x_{k}^{-1})\in U\hspace{.5cm}\implies\hspace{.5cm}x_{k+1}h'\varphi_{\tau_k}(h_k)\varphi_{\tau_{k}}(y_{k})\in U,$$
and hence,
$$h_{k+1}:=h'\varphi_{\tau_k}(h_k)\in H, \hspace{.5cm}y_{k+1}:=(x_{k+1}h_{k+1})^{-1}\hspace{.4cm}\mbox{ satisfy }\hspace{.4cm} y_{k+1}^{-1}\varphi_{\tau_k}(y_{k})\in U,$$
shows that $\xi=\{n; y_0, \ldots, y_n, \tau_0, \ldots, \tau_{n-1}\}$ is an $(U, \tau)$-chain between $x^{-1}$ and $hy^{-1}$ for some $h\in H$, proving the item.

\bigskip

2.2. Let $U\in\VC_G$ and $\tau>0$. Since $\widehat{\varphi}$ is chain transitive, the previous item implies that
$$\forall x\in G, \;\pi(x^{-1})\in\Omega(\pi(e), \widehat{\varphi})\hspace{.5cm}\implies\hspace{.5cm} \forall x\in G, \;\exists h\in H, \;hx\in\Omega(e, U, \tau).$$
In particular, we have a well-defined function 
$$f_{U, \tau}=f:G\rightarrow \R, \hspace{.5cm}f(x):=\inf\{\varrho(h, e);\; h\in H, \; hx\in\Omega(e, U, \tau)\},$$
where $\varrho$ is a left-invariant metric. Since $U$ is open, $f$ is a continuous function.

Let us denote by $H_{\nu}$, $\nu\in I$ the connected components of $H$. Since $H$ is closed, the same happens with $H_{\nu}$ and hence, there exists  $h_{\nu}\in H_{\nu}$ such that 
$$\varrho(h_{\nu}, e)=\inf\{\varrho(h, e), h\in H_{\nu}\}.$$
Let us define the open sets $G_{\nu}:=\{x\in G, h_{\nu}x\in\Omega(e, U, \tau)\}$. Since any $h\in H$ can be written as $h=h_0h_{\nu}$ for some $h_0\in H_0$ and $\nu\in I$, item 3. of Proposition \ref{properties} gives us that
$$h_0h_{\nu}x=hx\in\Omega(e, U, \tau)\hspace{.5cm}\implies\hspace{.5cm}\exists T>0; \; h_{\nu}x\in \Omega(\varphi_{-T}(h_0^{-1}), U, \tau)\subset\Omega(e, U, \tau),$$
where the inclusion is due to the fact that $H_0$ is $\varphi$-invariant and chain transitive. 

As a consequence, 
$$G=\bigcup_{\nu\in I}G_{\nu}\hspace{.5cm} \mbox{ and }\hspace{.5cm} f(x)=\varrho(h_{\nu}, e), \hspace{.5cm}\forall x\in {G_{\nu}}.$$
However, the fact that $G$ is connected and that $G_{\nu}$ is open forces that $f(x)=f(e)=0$ for all $x\in G$, showing that $G=\Omega(e, U, \tau)$. By the arbitrariness of $U\in\VC_G$, $\tau>0$ we conclude that $G=\Omega(e)$.  An analogous analysis for $\varphi^*$ implies that $G=\Omega^*(e)$, which shows the transitivity of $\varphi$, concluding the result.
\end{proof}



\begin{remark}
The previous result shows that projecting chains only depends on left translations, while lifting chains back depends on right translations. Moreover, if $H$ is a normal subgroup, the canonical projection is a homomorphism, and hence condition (\ref{homo0}) follows directly. Condition (\ref{homo}) is also true for normal subgroups; however, its proof is not straightforward (see Proposition \ref{quotient}).

\end{remark}

\subsection{Uniform neighborhoods}

In this section we show that any chain recurrent point of a flow of automorphisms is contained in its central subgroup. We start with an adaptation of the concept of uniform neighborhood, which appears first in \cite{Hurley} for the context of discrete time dynamical systems (see also \cite{Hurley1}).

\begin{definition}
    Let $\AC, \mathcal{B}$ be nonempty subsets of $G$. We say that $\mathcal{B}$ is a right uniform neighborhood of $\AC$ if there exists $U\in\VC_G$ such that 
    $$\forall x\in \overline{\AC}, \hspace{.5cm} xU\subset \mathcal{B}\hspace{.5cm}.$$ 
\end{definition}

The next result assures the existence of natural uniform neighborhoods associated with a flow of automorphisms.

\begin{proposition}
\label{uniform}
    For any compact neighborhood $V\in\VC_{G^-}$, there exists $\tau_0>0$ such that the set $\AC_V^{+, 0}:=G^{+, 0}V$ is a right uniform neighborhood of $\varphi_{\tau}(\AC_V^{+, 0})$ for all $\tau\geq\tau_0$. 
\end{proposition}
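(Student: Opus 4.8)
The plan is to reduce the assertion to a single inclusion of the form $\varphi_\tau(V)\,U\subset G^{+,0}V$ with $U\in\VC_G$ fixed and $\tau$ large, and then to extract such a $U$ from the local product structure of $G$ near $e$.

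First I would identify the closure appearing in the statement. Since $G^{+,0}$ is $\varphi$-invariant and $\varphi_\tau\in\mathrm{Aut}(G)$, one has $\varphi_\tau(\AC_V^{+,0})=\varphi_\tau(G^{+,0})\varphi_\tau(V)=G^{+,0}\varphi_\tau(V)$; as $G^{+,0}$ is closed and $\varphi_\tau(V)$ is compact, this product is closed, so $\overline{\varphi_\tau(\AC_V^{+,0})}=G^{+,0}\varphi_\tau(V)$. Now suppose $U\in\VC_G$ satisfies $\varphi_\tau(V)\,U\subset G^{+,0}V$. For $x\in G^{+,0}\varphi_\tau(V)$, write $x=g\,v'$ with $g\in G^{+,0}$ and $v'\in\varphi_\tau(V)$; then $xU=g\,(v'U)\subset g\,(G^{+,0}V)=G^{+,0}V=\AC_V^{+,0}$. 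Thus that single $U$ witnesses the right uniform neighborhood property for every $\tau$ for which $\varphi_\tau(V)\,U\subset G^{+,0}V$ holds, and it remains to produce $U$ together with a threshold $\tau_0$.

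To get $U$: since $\fg=\fg^{+,0}\oplus\fg^-$ as vector spaces and $G^{+,0},G^-$ are closed (hence embedded) subgroups, the multiplication map $\mu\colon G^{+,0}\times G^-\to G$, $\mu(a,b)=ab$, has differential $(X,Y)\mapsto X+Y$ at $(e,e)$, which is a linear isomorphism. By the inverse function theorem there are open neighborhoods $N'\in\VC_{G^{+,0}}$ and $N\in\VC_{G^-}$, which we may take with $N\subset V$, such that $W:=N'N=\mu(N'\times N)$ is open in $G$; then $e\in W\subset G^{+,0}V$. Choose a symmetric $U\in\VC_G$ with $UU\subset W$. To get $\tau_0$: the restriction $\varphi|_{G^-}$ has derivation with all eigenvalues of negative real part, so by Remark \ref{metricas} (equivalently, since $(d\varphi_t|_{G^-})_e=e^{t\DC|_{\fg^-}}\to 0$) the maps $\varphi_t|_{G^-}$ converge to the constant map $e$ uniformly on compact subsets of $G^-$ as $t\to+\infty$; hence there is $\tau_0>0$ with $\varphi_\tau(V)\subset U$ for all $\tau\geq\tau_0$. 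Then $\varphi_\tau(V)\,U\subset UU\subset W\subset G^{+,0}V$ for every $\tau\geq\tau_0$, and the reduction above closes the argument.

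The only step that is not pure bookkeeping is the local product decomposition $G=G^{+,0}G^-$ near $e$ coming from the inverse function theorem, which is where closedness of $G^\pm$ and the splitting $\fg=\fg^{+,0}\oplus\fg^-$ genuinely enter. The structural point that makes everything else routine — and that allows one to use a single $U$ uniformly in $\tau$ — is that the condition $xU\subset\AC_V^{+,0}$ is invariant under left multiplication of $x$ by elements of $G^{+,0}$, so only the behaviour of $\varphi_\tau$ near $G^-$ matters, and there $\varphi_\tau$ contracts toward $e$.
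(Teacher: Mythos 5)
Your argument is correct, and it follows the same basic strategy as the paper's proof (exploit that $x U\subset\AC_V^{+,0}$ is unaffected by left multiplication of $x$ by elements of $G^{+,0}$, so everything reduces to the compact factor $\varphi_\tau(V)\subset G^-$, which $\varphi_\tau$ contracts toward $e$), but the mechanism by which you produce the uniform $U$ is genuinely different. The paper fixes $\tau_0$ only so that $\overline{\varphi_\tau(V)}\subset V$, asserts that $\AC_V^{+,0}$ is open, and then runs a finite-subcover argument over the compact set $\overline{\varphi_\tau(V)}$: around each $h\in\overline{\varphi_\tau(V)}$ it picks $U_h$ with $hU_h^2\subset\AC_V^{+,0}$ and intersects finitely many of these to get $U$ (which a priori depends on $\tau$). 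You instead build a single $U$ once and for all from the local product chart $\mu\colon G^{+,0}\times G^-\to G$ near $(e,e)$, and compensate by taking $\tau_0$ large enough that $\varphi_\tau(V)\subset U$ rather than merely $\varphi_\tau(V)\subset V$; this buys you a $U$ that works simultaneously for all $\tau\geq\tau_0$ (slightly stronger than what the statement asks, though the extra uniformity is not needed in Theorem \ref{teo}), and it also makes explicit, via the inverse function theorem and $\fg=\fg^{+,0}\oplus\fg^-$, the openness of $\AC_V^{+,0}$ that the paper uses without justification. One small point of care: the uniform shrinking $\varphi_\tau(V)\subset U$ for large $\tau$ rests on the exponential contraction of $\varphi|_{G^-}$ (the inequality in Remark \ref{metricas} is evidently meant to carry a factor $\mathrm{e}^{-\lambda t}$); your parenthetical appeal to $(d\varphi_t|_{G^-})_e\to 0$ alone is not literally a proof of uniform convergence on compacta, but the contraction fact you need is exactly the one the paper itself invokes when it writes $\overline{\varphi_\tau(B^-(e,\delta))}\subset V$, so you are on the same footing.
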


\begin{proof} Let us consider $\delta>0$ such that $V\subset B^-(e, \delta)$. By Remark \ref{metricas}, there exists $\tau_0>0$ such that 
$$\forall\tau\geq\tau_0, \hspace{.5cm}\overline{\varphi_{\tau}(B^-(e, \delta))}\subset V\hspace{.5cm}\implies\hspace{.5cm}\hspace{.5cm}\forall\tau\geq\tau_0, \hspace{.5cm}\overline{\varphi_{\tau}(V)}\subset V.$$
Since $\AC_V^{+, 0}$ is open, for any $h\in \overline{\varphi_{\tau}(V)}$ there exists $U_h\in \VC_{G}$ such that $hU^2_h\subset \AC_V^{+, 0}$. By the compactness of $\overline{\varphi_{\tau}(V)}$, the open cover 
    $\{hU_h\}_{h\in \overline{\varphi_{\tau}(V)}}$  admits a finite subcover $\{h_1U_{h_1}, \ldots, h_kU_{h_k}\}$, and so, $U:=\bigcap_{j=1}^k U_{h_j}\in \VC_{G}$.

On the other hand, by the compactness of $\overline{\varphi_{\tau}(V)}$ and the fact that $G^{+, 0}$ is closed and $\varphi$-invariant, it holds that 
$$\overline{\varphi_{\tau}(\AC^{+, 0}_V)}=\overline{\varphi_{\tau}(G^{+, 0}V)}=\overline{G^{+, 0}\varphi_{\tau}(V)}=G^{+, 0}\overline{\varphi_{\tau}(V)}.$$
Therefore, for any $x\in \overline{\varphi_{\tau}(\AC^{+, 0}_V)}$, there exists $g\in G^{+, 0}$ and $h\in \overline{\varphi_{\tau}(V)}$ such that $x=gh$. By the previous, $h\in h_iU_{h_i}$ for some $i\in\{1, \ldots, k\}$, implying that 
$$xU=ghU\subset gh_iU_{h_i}U\subset gh_iU_{h_i}^2\subset g\AC_V^{+, 0}\subset \AC_V^{+, 0},$$
showing the result.
\end{proof}

\bigskip

An important feature of uniform neighborhoods is the fact that chains starting inside them remain inside. This property implies the following result: 

\begin{theorem}
\label{teo}
    It holds that
    $$\RC_C(\varphi)\cap G^{+, 0}G^-=\RC_C\left(\varphi|_{G^0}\right).$$
\end{theorem}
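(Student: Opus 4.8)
The plan is to prove the two inclusions separately, using Proposition~\ref{uniform} (uniform neighborhoods) for one direction and the restriction-lifting machinery of Proposition~\ref{properties} for the other. Throughout, I fix left-invariant metrics $\varrho$ on $G$ and on the closed subgroups $G^+, G^0, G^-$; since $G$ is decomposable, every $x\in G^{+,0}G^-$ factors uniquely as $x = x^+ x^0 x^-$ with $x^\pm\in G^\pm$, $x^0\in G^0$, and this factorization is continuous.

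For the inclusion $\RC_C(\varphi|_{G^0})\subset\RC_C(\varphi)\cap G^{+,0}G^-$, the containment in $G^{+,0}G^-$ is immediate since $G^0\subset G^{+,0}G^-$. That a chain of $\varphi|_{G^0}$ in $G^0$ is also a chain of $\varphi$ in $G$ follows because $G^0$ is $\varphi$-invariant and closed: given $U\in\VC_G$ and $\tau>0$, the neighborhood $U\cap G^0\in\VC_{G^0}$, and any $(U\cap G^0,\tau)$-chain of $\varphi|_{G^0}$ from $x$ to $x$ is an $(U,\tau)$-chain of $\varphi$; hence $x\in\Omega(x)$. (One must also check that $\varrho|_{G^0}$-balls are comparable to traces of $\varrho$-balls, which holds since $G^0$ is an embedded submanifold; using the topological definition of chains avoids this entirely.)

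For the reverse inclusion $\RC_C(\varphi)\cap G^{+,0}G^-\subset\RC_C(\varphi|_{G^0})$, fix $x\in\RC_C(\varphi)$ with $x = x^+ x^0 x^-\in G^{+,0}G^-$; I want to show $x^+ = e$ and $x^- = e$, so that $x\in G^0$, and then that $x$ is chain recurrent for $\varphi|_{G^0}$. To kill the $G^-$-component: apply Proposition~\ref{uniform} to obtain a compact neighborhood $V\in\VC_{G^-}$ with $x^-\notin\overline{G^{+,0}V}$ unless $x^-$ is already ``small''; more precisely, since $\bigcap_{V}\overline{G^{+,0}V} = G^{+,0}$ (intersecting over compact $V\ni e$ in $G^-$, using decomposability and uniqueness of factorization), if $x^-\ne e$ there is a compact $V\in\VC_{G^-}$ with $x\notin\overline{\AC_V^{+,0}}$, where $\AC_V^{+,0}=G^{+,0}V$. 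By Proposition~\ref{uniform} there is $\tau_0>0$ and $U\in\VC_G$ such that $yU\subset\AC_V^{+,0}$ for all $y\in\overline{\varphi_\tau(\AC_V^{+,0})}$ and all $\tau\ge\tau_0$; shrinking $U$, we may also assume $x\notin\overline{\AC_V^{+,0}}\cdot U^{-1}$, i.e.\ any $(U,\tau_0)$-chain that ever enters $\AC_V^{+,0}$ stays in $\AC_V^{+,0}$ forever and therefore cannot return to $x$. Since $x\in\Omega(x)$, there is an $(U,\tau_0)$-chain from $x$ to $x$; its first point $x_0 = x\in\AC_V^{+,0}$ (as $e\in V$ so $x^+x^0\cdot(\text{anything near }x^-)$... wait — need $x\in\AC_V^{+,0}$). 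Hmm: in fact $x = x^+x^0x^-$ and $x^-$ need not lie in $V$; but one can instead observe that the chain, started at $x$, after time $\tau_0$ lands near $\varphi_{\tau_0}(x)$ whose $G^-$-component is $\varphi_{\tau_0}(x^-)$, contracted toward $e$, hence eventually inside $V$; thus after finitely many jumps the chain enters $\AC_V^{+,0}$, is trapped there, and cannot come back to $x$ — a contradiction. Hence $x^- = e$. Applying the same argument to the reverse flow $\varphi^*$ (which swaps the roles of $G^+$ and $G^-$) and using $\RC_C(\varphi)=\RC_C(\varphi^*)$ from Proposition~\ref{properties}(4), we get $x^+ = e$, so $x\in G^0$.

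Finally, given $x\in G^0\cap\RC_C(\varphi)$, I must upgrade chain recurrence in $G$ to chain recurrence in $G^0$. The point is that $G^{+,0}$ and $G^{-,0}$ are closed $\varphi$-invariant subgroups with $G^{+,0}\cap G^{-,0}=G^0$, and the restriction of $\varphi$ to each has its ``central'' direction equal to $G^0$ with contraction transverse to it; so a trapping argument as above (applied inside $G^{+,0}$ using $V\in\VC_{G^+}$, and inside $G^{-,0}$ using $V\in\VC_{G^-}$) shows that a $(U,\tau)$-chain of $\varphi$ from $x\in G^0$ back to $x$ can be pushed to stay within an arbitrarily small neighborhood of $G^0$, and then the transverse coordinates can be projected out: writing each chain point $x_i = x_i^+ x_i^0 x_i^-$ with $x_i^\pm$ close to $e$, the points $x_i^0$ form an $(U',\tau)$-chain of $\varphi|_{G^0}$ from $x$ to $x$ for a suitable $U'\in\VC_{G^0}$, because the factorization is continuous and $\varphi_t$ respects it. Hence $x\in\RC_C(\varphi|_{G^0})$.

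\textbf{Main obstacle.} The delicate step is the trapping/projection argument in the last paragraph: turning a chain of $\varphi$ in $G$ that stays near $G^0$ into a genuine chain of $\varphi|_{G^0}$. The subtlety is that the ``jump error'' $x_{i+1}^{-1}\varphi_{\tau_i}(x_i)\in U$ does not split multiplicatively along the decomposition $G^+G^0G^-$ in an entirely trivial way — one needs uniform continuity of the factorization map on a compact region and must control how the small transverse components $x_i^\pm$ propagate under $\varphi_{\tau_i}$ (they contract on one side and expand on the other, so boundedness over the whole chain requires the uniform-neighborhood trapping to confine them a priori). I expect this is exactly why the paper isolates Proposition~\ref{uniform} beforehand: it is the tool that both forces $x^\pm = e$ and keeps the whole chain in a compact set where the factorization is uniformly continuous.
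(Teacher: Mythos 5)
Your first half is essentially the paper's argument. The easy inclusion $\RC_C\left(\varphi|_{G^0}\right)\subset\RC_C(\varphi)\cap G^{+,0}G^-$ is fine, and your trapping argument via Proposition \ref{uniform}, showing that a chain-recurrent point of $G^{+,0}G^-$ must lie in $G^{+,0}$ and then (reversing the flow, via Proposition \ref{properties}(4) and the trivial intersections) in $G^0$, is the same mechanism the paper uses; the paper just runs it without contradiction, choosing $V$ large enough that $x\in\AC_V^{+,0}$, trapping the whole chain, deducing $x\in\overline{\varphi_{\tau}(\AC_V^{+,0})}$ for every $\tau$, and intersecting. Note only that the theorem does not assume decomposability: the factorization of elements of $G^{+,0}G^-$ that you invoke is available anyway because $G^{+}\cap G^{0}=G^{+,0}\cap G^{-}=\{e\}$.

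The genuine gap is exactly the step you flag at the end: upgrading $x\in G^0\cap\RC_C(\varphi)$ to $x\in\RC_C\left(\varphi|_{G^0}\right)$. Your plan --- trap both transverse components simultaneously so that every chain point $x_i=x_i^+x_i^0x_i^-$ has both $x_i^{\pm}$ small, then project onto $G^0$ --- does not go through as stated. Forward trapping (Proposition \ref{uniform}) only confines the $G^-$-component; the $G^+$-components of the chain points remain completely uncontrolled along a forward chain, and combining forward trapping (which lives in the coordinates $G^{+,0}G^-$) with backward trapping (which lives in the coordinates $G^{-,0}G^+$) requires reconciling two different factorizations that do not interact trivially, since $G^+$ need not normalize $G^-$. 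The paper resolves this with a two-stage projection. First it projects only onto $G^{+,0}$: writing $x_i=g_iv_i$ with $g_i\in G^{+,0}$ and $v_i\in V\subset G^-$, $V$ compact, the jump condition yields the algebraic sandwich
$$g_{i+1}^{-1}\varphi_{\tau_i}(g_i)\in v_{i+1}W^{-1}\varphi_{\tau_i}(v_i)^{-1}\subset VW^{-1}V^{-1}\subset\widehat{U},$$
so the $g_i$ form a chain of $\varphi|_{G^{+,0}}$ --- no uniform continuity of the factorization map is needed, only compactness of $V$. This gives $\RC_C(\varphi)\cap G^{+,0}\subset\RC_C\left(\varphi|_{G^{+,0}}\right)$. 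It then applies the already-established statement to the reverse flow restricted to the group $G^{+,0}$, for which $G^+$ is the stable direction and the unstable part is trivial, so a second projection of the same kind lands in $\RC_C\left(\varphi|_{G^0}\right)$. You would need either to reproduce this recursion or to supply an actual argument for the simultaneous control; as written, your last paragraph states the goal rather than proving it.
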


\begin{proof} For any $x\in \RC_C(\varphi)\cap G^{+, 0}G^-$, there exists a compact neighborhood $V\in\VC_{G^-}$ such that $x\in \AC_V^{+, 0}$. By the previous result, there exists $\tau_0>0$ such that, for any $\tau\geq\tau_0$, the set $\AC_V^{+, 0}$ is a right uniform neighborhood of $\overline{\varphi_{\tau}(\AC_V^{+, 0})}$. In particular, there exists by the previous result $U\in\VC_G$ such that $zU\in\AC_V^{+, 0}$ for any $z\in \overline{\varphi_{\tau}(\AC_V^{+, 0})}$. Let then $W\in\VC_G$ with $W\subset U$ and consider  a $(W^{-1}, 2\tau)$-chain  $\xi:=\{n; x_0, \ldots, x_n, \tau_0, \ldots, \tau_{n-1}\}$ from $x$ to itself. Since
$$
\forall i=0, \ldots, n-1, \hspace{.5cm} x_{i+1}^{-1}\varphi_{\tau_i}(x_i)\in W^{-1}\hspace{.5cm}\iff\hspace{.5cm} x_{i+1}\in\varphi_{\tau_i}(x_i)W,
$$
if $x_i\in\AC_V^{+, 0}$, then 
$$
\varphi_{\tau_i}(x_i)\in\varphi_{\tau_i}(\AC_V^{+, 0})=\varphi_{\tau}\left(\varphi_{\tau_i-\tau}(\AC_V^{+, 0})\right)\subset\varphi_{\tau}(\AC_V^{+, 0})\hspace{.5cm}\implies\hspace{.5cm}x_{i+1}\in\AC_V^{+, 0},
$$
where for the last inclusion we used that $\tau_i\geq 2\tau$ implies that $\varphi_{\tau_i-\tau}(\AC_V^{+, 0})\subset \AC_V^{+, 0}$. Since $x_0=x\in\AC_V^{+, 0}$, we conclude that all the points $x_i$, $i=1, \ldots, n$ of the chain belong to $\AC_V^{+, 0}$. Moreover, it also holds that 
$$x=x_n\in\varphi_{\tau_{n-1}}(x_{n-1})W\subset\varphi_{\tau}(\AC_V^{+, 0})W.$$
Since $W\in\VC_G$ satisfying $W\subset U$ was arbitrary, we conclude that 
$$x\in \overline{\varphi_{\tau}(\AC_V^{+, 0})}.$$
On the other hand, the fact that  $\bigcap_{\tau>0}\varphi_{\tau}(V)=\{e\}$ allows us to obtain
$$x\in \bigcap_{\tau>0}\overline{\varphi_{\tau}(\AC_V^{+, 0})}=\bigcap_{\tau>0}G^{+, 0}\overline{\varphi_{\tau}(V)}\subset G^{+, 0}\hspace{.5cm}\implies\hspace{.5cm}\RC_C(\varphi) \cap G^{+, 0}G^-\subset \RC_C(\varphi)\cap G^{+, 0}.$$

Let us show that in fact
$$\RC_C(\varphi)\cap G^{+, 0}\subset\RC_C\left(\varphi|_{G^{+, 0}}\right).$$

Let $\widehat{U}\in\VC_{G}$ and consider a compact neighborhood $V\in V_{G^-}$ such that
$\overline{VV^{-1}}\subset \widehat{U}$. By Proposition \ref{uniform}, there exists $\tau_0>0$ and $U\in\VC_G$ such that 
$$\forall \tau\geq\tau_0, \hspace{.5cm}\overline{\varphi_{\tau}(V)}\subset V\hspace{.5cm}\mbox{ and }\hspace{.5cm}\overline{\varphi_{\tau}(\AC_V^{+, 0})}U\subset\AC_V^{+, 0}.$$
Consider also $W\in\VC_G$ satisfying 
$$W\subset U\hspace{.5cm}\mbox{ and }\hspace{.5cm}VW^{-1}V^{-1}\subset \widehat{U},$$
which exists by the compactness of $\overline{VV^{-1}}$. Let then $x\in \RC_C(\varphi)\cap G^{+, 0}$. Since $x\in\AC_V^{+, 0}$ for any $V\in\VC_{G^-}$, the previous calculations implies that a $(W^{-1}, 2\tau)$-chain $\xi:=\{n; x_0, \ldots, x_n, \tau_0, \ldots, \tau_{n-1}\}$ from $x$ to itself satisfies that $x_i\in\AC_{V}^{+, 0}$, for $i=1, \ldots n$. Write $$x_i=g_iv_i, \hspace{.5cm}\mbox{ where }\hspace{.5cm} g_i\in G^{+, 0}\hspace{.5cm}\mbox{ and }\hspace{.5cm} v_i\in V\subset G^-,$$
with $v_0=v_n=e$ and $g_0=g_n=x$. It holds that
$$x_{i+1}^{-1}\varphi_{\tau_i}(x_i)\in W^{-1}\hspace{.5cm}\iff \hspace{.5cm}v_{i+1}^{-1}g_{i+1}^{-1}\varphi_{\tau_i}(g_i)\varphi_{\tau_i}(v_i)\in W^{-1}$$
$$\implies\hspace{.5cm} G^{+, 0}\ni g_{i+1}^{-1}\varphi_{\tau_i}(g_i)\in v_{i+1}W^{-1}\varphi_{\tau_i}(v_i)^{-1}\subset VW^{-1}V^{-1}\subset \widehat{U}.$$
Therefore, $\widehat{\xi}=\{n; g_0, g_1, \ldots, g_n, \tau_0, \ldots, \tau_{n-1}\}$ is a $(\widehat{U}\cap G^{+, 0}, 2\tau)$-chain from $x$ to itself for the flow $\varphi|_{G^{+, 0}}$. Since any element of $\VC_{G^{+, 0}}$ contains a neighborhood of the form $\widehat{U}\cap G^{+, 0}$ for some $\widehat{U}\in\VC_{G}$, the arbitrariness of former in the previous calculations assures that $x\in\RC_{C}\left(\varphi|_{G^{+, 0}}\right)$, showing that 
$$\RC_{C}\left(\varphi\right)\cap G^{+, 0}\subset \RC_{C}\left(\varphi|_{G^{+, 0}}\right),$$
as desired. By doing the same calculations as previously for the reverse flow $\left(\varphi|_{G^{+, 0}}\right)^*=\varphi^*|_{G^{+, 0}},$
allows us to conclude that
$$\RC_{C}\left(\varphi|_{G^{+, 0}}\right)=\RC_{C}\left(\varphi^*|_{G^{+, 0}}\right)=\RC_{C}\left(\varphi^*|_{G^{+, 0}}\right)\cap G^0G^+\subset \RC_{C}\left(\left(\varphi^*|_{G^{+, 0}}\right)|_{G^0}\right)=\RC_{C}\left(\varphi^*|_{G^0}\right)=\RC_{C}\left(\varphi|_{G^0}\right),$$
showing that 
$$\RC_C(\varphi)\cap G^{+, 0}G^-\subset \RC\left(\varphi|_{G^0}\right).$$
Since the opposite inclusion always holds, the result is proved.
\end{proof}

\bigskip

As a direct consequence, we have the following:

\begin{corollary}
\label{cor}
Let $\varphi$ be a flow of automorphisms on a Lie group $G$. If $G$ is decomposable by $\varphi$, then $$\RC_C(\varphi)=\RC_C\left(\varphi|_{G^0}\right).$$
\end{corollary}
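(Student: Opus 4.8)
The plan is to derive Corollary \ref{cor} directly from Theorem \ref{teo} by observing that decomposability of $G$ by $\varphi$ is exactly what is needed to turn the partial statement $\RC_C(\varphi)\cap G^{+,0}G^- = \RC_C(\varphi|_{G^0})$ into an unconditional one. The key point is that decomposability means $G = G^{+,0}G^-$, so the intersection $\RC_C(\varphi)\cap G^{+,0}G^-$ appearing on the left side of Theorem \ref{teo} is simply all of $\RC_C(\varphi)$.

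Concretely, I would argue as follows. Since $G$ is decomposable by $\varphi$, by definition $G = G^{+,0}G^- = G^{-,0}G^+$; in particular $G = G^{+,0}G^-$. Therefore $\RC_C(\varphi) \subset G = G^{+,0}G^-$, which gives $\RC_C(\varphi)\cap G^{+,0}G^- = \RC_C(\varphi)$. Plugging this identity into the conclusion of Theorem \ref{teo} yields immediately
$$\RC_C(\varphi) = \RC_C(\varphi)\cap G^{+,0}G^- = \RC_C\left(\varphi|_{G^0}\right),$$
which is the desired equality.

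There is essentially no obstacle here: this is a one-line deduction once Theorem \ref{teo} is in hand, and the only thing to be careful about is simply invoking the correct defining equality of decomposability (namely $G = G^{+,0}G^-$, one of the two factorizations in the definition) so that the ambient intersection in Theorem \ref{teo} becomes trivial. I would write it as a short paragraph rather than a multi-step argument.

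\begin{proof}
Since $G$ is decomposable by $\varphi$, by definition we have $G = G^{+, 0}G^-$. Hence $\RC_C(\varphi)\subset G = G^{+, 0}G^-$, so that
$$\RC_C(\varphi)\cap G^{+, 0}G^- = \RC_C(\varphi).$$
Combining this with Theorem \ref{teo}, we obtain
$$\RC_C(\varphi) = \RC_C(\varphi)\cap G^{+, 0}G^- = \RC_C\left(\varphi|_{G^0}\right),$$
as claimed.
\end{proof}
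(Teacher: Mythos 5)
Your proof is correct and is essentially identical to the paper's: both deduce the claim in one line by noting that decomposability gives $G=G^{+,0}G^-$, so the intersection in Theorem \ref{teo} becomes all of $\RC_C(\varphi)$.
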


\begin{proof}
    In fact, if $\varphi$ decomposes $G$, then $G=G^{+, 0}G^-$ which by Theorem \ref{teo}  implies that 
    $$\RC_C\left(\varphi|_{G^0}\right)\subset\RC_C(\varphi)=\RC_C(\varphi)\cap G^{+, 0}G^-=\RC_C\left(\varphi|_{G^0}\right).$$
\end{proof}

\section{The chain recurrent set of flows of automorphisms of decomposable Lie groups}

In this section, we prove our main result concerning the chain recurrent set of flow of automorphisms, namely, we prove the following:

\begin{theorem}
\label{main}
    Let $G$ be a connected Lie group and $\varphi$ a flow of automorphisms on $G$. If $G$ is decomposable by $\varphi$, $$\RC_C(\varphi)=\RC_C\left(\varphi|_{G^0}\right)=G^0,$$
    where $G^0$ is the central subgroup associated with $\varphi$.
\end{theorem}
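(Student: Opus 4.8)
By Corollary \ref{cor}, it suffices to prove that $\RC_C\left(\varphi|_{G^0}\right)=G^0$; equivalently, since $G^0$ is the central subgroup, that $\varphi|_{G^0}$ is chain transitive. Note that the derivation of $\varphi|_{G^0}$ is the restriction $\DC|_{\fg^0}$, which has only eigenvalues with zero real part (the hyperbolic part kills $\fg^0$). By Proposition \ref{properties}(2) we may further strip the elliptic part, so it remains to show that a \emph{nilpotent} flow of automorphisms on a connected Lie group is chain transitive. This reduces the whole theorem to the following claim: if $\varphi$ is nilpotent, then $\varphi$ is chain transitive.

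The plan is to prove this claim by induction on $\dim G$, peeling off a closed normal $\varphi$-invariant subgroup and applying Lemma \ref{quotient}(2.2). First I would treat the solvable case: if $G$ is solvable, I would use that the nilradical or a suitable closed connected normal $\varphi$-invariant subgroup $H$ gives a quotient $G/H$ of strictly smaller dimension; since $H$ is normal, condition (\ref{homo0}) is automatic, and condition (\ref{homo}) holds for normal subgroups (as remarked after Lemma \ref{quotient}). By induction $\widehat{\varphi}$ on $G/H$ is chain transitive, and $H_0=H$ with $\varphi|_H$ again nilpotent, hence chain transitive by induction, so Lemma \ref{quotient}(2.2) gives chain transitivity of $\varphi$. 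The base of the induction is an abelian (e.g. one-dimensional, or vector-group) $G$ with a nilpotent derivation; there one checks chain transitivity directly — for the additive group $\R^n$ with nilpotent $\DC$, the polynomial growth of $\rme^{t\DC}$ still allows small-jump chains to return near any point, a short explicit computation. For the semisimple case I would invoke the fact, cited in the excerpt, that a semisimple $G$ is decomposable by $\varphi$ only when $G=G^0$; combined with the observation that a nilpotent derivation of a semisimple Lie algebra is zero (semisimple Lie algebras have only semisimple derivations, so a nilpotent one vanishes), one gets $\varphi=\id_G$, which is trivially chain transitive. Finally, for a general connected $G$ with nilpotent $\varphi$, I would use the $\varphi$-invariant closed normal radical $R\subset G$: $\varphi|_R$ is a nilpotent flow on a solvable group, hence chain transitive by the solvable case, $R$ is connected so $R_0=R$ is chain transitive, and $G/R$ is semisimple with induced nilpotent flow, hence trivial and chain transitive; Lemma \ref{quotient}(2.2) then finishes it.

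The main obstacle I anticipate is the base case and, more generally, verifying chain transitivity of nilpotent flows on abelian (vector) groups with full precision: one must produce, for every $U\in\VC_G$ and $\tau>0$, an actual $(U,\tau)$-chain from $e$ to any prescribed point and back, controlling the accumulated error across jumps. The key point is that for a nilpotent $\DC$ the one-parameter group $\rme^{t\DC}$ moves points only polynomially, so over a long time one can interpolate between $x$ and $\varphi_T(x)$ by $\lceil$many$\rceil$ small corrections, each lying in $U$; making the bookkeeping uniform (independent of the target point, using invariance of $\Omega$ under $\varphi$ and the translation-covariance from Proposition \ref{properties}(3)) is the technical heart. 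A secondary subtlety is ensuring at each inductive step that the subgroup chosen is simultaneously closed, connected, normal and $\varphi$-invariant so that both Lemma \ref{quotient} hypotheses and the "$H_0=H$" requirement hold; for the radical and for characteristic subgroups of solvable groups this is standard, but it must be stated carefully. Once the nilpotent claim is established, the equality $\RC_C(\varphi)=G^0$ follows by assembling Corollary \ref{cor}, Proposition \ref{properties}(2), and the claim, with no further difficulty.
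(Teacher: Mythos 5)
Your reduction to the claim that nilpotent flows of automorphisms are chain transitive, and your treatment of the solvable case and of the final assembly via the radical and Lemma \ref{quotient}(2.2), match the paper's strategy. The genuine gap is in the semisimple case. You assert that ``a nilpotent derivation of a semisimple Lie algebra is zero'' because ``semisimple Lie algebras have only semisimple derivations.'' This is false for noncompact semisimple algebras: every derivation is \emph{inner}, $\DC=\ad(X)$, but $\ad(X)$ is semisimple only when $X$ is a semisimple element. If $X$ is a nonzero nilpotent element (e.g.\ a strictly upper triangular matrix in $\mathfrak{sl}_2(\R)$), then $\ad(X)$ is a nonzero nilpotent derivation and the flow $\varphi_t=C_{\rme^{tX}}$ is far from the identity. (Your statement is correct only in the compact semisimple case, where the Cartan--Killing form is definite and forces every $\ad(X)$ to be elliptic; the paper uses exactly that observation there.) The appeal to ``semisimple $G$ decomposable iff $G=G^0$'' does not rescue this: for a nilpotent flow one always has $G=G^0$ trivially, since the hyperbolic part of the derivation vanishes, so decomposability carries no information here.

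The noncompact semisimple case is in fact the hard core of the theorem and cannot be dispatched algebraically. The paper's argument places the nilpotent element $X$ inside a minimal parabolic subalgebra $\fp$, observes that the associated parabolic subgroup $P$ is $\varphi$-invariant, projects the flow to the compact flag manifold $G/P$ (where it becomes the flow of the one-parameter group $\rme^{tX}$, chain transitive by Ferraiol--Patr\~ao--Seco), uses the solvable case to see that $P_0$ is chain transitive, and then lifts chain transitivity back through Lemma \ref{quotient}(2.2) using the compactness of $G/P$ to verify condition (\ref{homo}). Without some substitute for this step your induction collapses at the semisimple quotient $G/R$, so the proposal as written does not prove the theorem.
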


We prove the previous theorem in the next sections, where we split the proof depending on the class to which the group $G$ belongs. Before starting, let us make some reductions in the case we have to consider. By Proposition \ref{properties}, the chain recurrent set of a flow of automorphisms satisfies
$$\RC_{C}(\varphi)=\RC_C\left(\varphi^{\HC, \NC}\right),$$
where $\varphi^{\HC}$ and $\varphi^{\NC}$ are the hyperbolic and nilpotent parts of $\varphi$, respectively. Moreover, if $\varphi$ is a flow of automorphisms that decomposes $G$, then $\varphi^{\HC, \NC}$ also decomposes $G$, and by Theorem \ref{teo} and its corollary, we get that
$$\RC_C\left(\varphi^{\HC, \NC}\right)=\RC_C\left(\varphi^{\HC, \NC}|_{G^0}\right)=\RC_C\left(\varphi^{\NC}|_{G^0}\right),$$
where for the last equality we used that 
$$\varphi^{\HC, \NC}=\varphi^{\HC}\circ\varphi^{\NC}=\varphi^{\NC}\circ\varphi^{\HC}\hspace{.5cm}\mbox{ and }\hspace{.5cm}\varphi^{\HC}|_{G^0}=\id_{G^0}.$$
Since, in the decomposable case, $G^0$ is connected, we only have to show that nilpotent flows of automorphisms are chain transitive.

Another assumption that we can make w.l.o.g. is that the group $G$ is simply connected. In fact, since any flow of automorphisms $\varphi$ on a connected group $G$ can be lifted to a flow of automorphisms $\widetilde{\varphi}$ on the simply connected covering $\widetilde{G}$ of $G$. The fact that
$$\forall t\in\R, \hspace{.5cm}\pi\circ\widetilde{\varphi}_t=\varphi_t\circ\pi,$$
implies in particular that $\pi(\widetilde{G}^0)=G^0$. Since $\pi\left(\RC_C(\widetilde{\varphi})\right)\subset \RC_C(\varphi)$ (Lemma \ref{quotient}), we have that
$$\widetilde{G}^0\subset \RC_C(\widetilde{\varphi})\hspace{.5cm}\implies\hspace{.5cm} G^0\subset \RC_C(\varphi).$$

\subsection{Nilpotent flow of automorphisms on solvable Lie groups}

In this section, we show that a nilpotent flow of automorphisms on a solvable Lie group is chain transitive.

\begin{proposition}
\label{solvable}
Any nilpotent flow of automorphisms on a connected solvable Lie group is chain transitive.
\end{proposition}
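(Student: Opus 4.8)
The strategy is to induct on the derived length of the solvable group $G$, using the fact that we may assume $G$ is simply connected (as reduced in the preceding discussion) and that $\varphi$ is nilpotent, so its associated derivation $\DC$ is nilpotent and hence $G = G^0$, i.e., there is no stable or unstable part to worry about. The base case is $G$ abelian: then $G \cong \R^n$, $\varphi_t = \rme^{t\DC}$ with $\DC$ nilpotent, and one checks directly that chains can be built between any two points --- the orbit $\rme^{t\DC}x$ grows at most polynomially, so given $U$ and $\tau$ one concatenates short jumps along a segment from $x$ toward $y$, correcting by elements of $U$; the polynomial (non-exponential) growth is exactly what makes the errors controllable. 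Alternatively, in the abelian nilpotent case one uses that $\rme^{t\DC} - \id$ is itself nilpotent, so the reachable set from $x$ in one long time step plus a small $U$-correction already covers a neighborhood that can be inflated.

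For the inductive step, let $N$ be a nontrivial $\varphi$-invariant closed connected normal subgroup with $G/N$ of smaller derived length (for instance, take $N$ to be the closure of the last nontrivial term of the derived series, which is characteristic hence $\varphi$-invariant, or peel off a central $\varphi$-invariant $\R$-factor using simple connectedness). Since $N$ is normal and closed, Lemma \ref{quotient} applies: condition (\ref{homo0}) holds automatically (the projection is a homomorphism), and the remark after Lemma \ref{quotient} asserts (\ref{homo}) holds for normal subgroups as well. The induced flow $\widehat{\varphi}$ on $G/N$ is again nilpotent on a connected simply connected solvable group, hence chain transitive by the inductive hypothesis. The restriction $\varphi|_N$ is nilpotent on a connected solvable group of smaller dimension, hence by induction $N = N_0$ is chain transitive. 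Then part 2.2 of Lemma \ref{quotient} --- "if $\widehat{\varphi}$ is chain transitive and $H_0$ is a chain transitive subset, then $\varphi$ is chain transitive" --- delivers the conclusion immediately.

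The only real work, then, is the base case: showing a nilpotent flow of automorphisms on a simply connected abelian (or more safely, on a simply connected nilpotent) Lie group is chain transitive. The main obstacle is making the chain-building estimate uniform: one must show that for every $U \in \VC_G$ and every $\tau > 0$, and every pair $x, y$, a finite $(U,\tau)$-chain exists. The clean way is to exploit that $\DC$ nilpotent implies $\rme^{\tau \DC}$ is unipotent, so $\rme^{\tau\DC} - \id$ is a nilpotent linear map; iterating $\varphi_\tau$ one sees that the group generated by $\{\varphi_\tau(x)x^{-1} : x \in K\}$ over a compact set $K$ stays in a bounded region, and a path-following argument with $U$-sized corrections connects any $x$ to any $y$ --- crucially with no expansion, since all eigenvalues of $\DC$ are $0$. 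I would phrase this via the exponential coordinates on the simply connected nilpotent $G$, where $\varphi_t$ becomes a unipotent linear flow on $\fg \cong \R^n$, reducing everything to the linear statement, which then propagates up the lower central series of $\fg$ by the same quotient-and-lift mechanism as above (or is handled in one stroke by the polynomial-growth estimate). A subtle point to handle carefully is that "chain transitive" requires $x \in \Omega(y)$ \emph{and} $y \in \Omega(x)$ for all pairs, but since the reverse flow $\varphi^*$ of a nilpotent flow is again nilpotent, establishing one direction for all nilpotent flows gives the other for free via Proposition \ref{properties}(4).
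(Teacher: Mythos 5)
Your proposal is correct and follows essentially the same route as the paper: peel off the last (abelian, characteristic, hence $\varphi$-invariant and normal) term of the derived series, apply the abelian case to it and the inductive hypothesis to the quotient, and lift via Lemma \ref{quotient}(2.2) using that normal subgroups satisfy condition (\ref{homo}); whether one inducts on dimension (as the paper does) or on derived length is immaterial. The only real divergence is the abelian base case, which the paper simply outsources to \cite[Theorem 3.3]{FCASEV} while you sketch the polynomial-growth chain-building argument directly --- your sketch is the right idea but is exactly the part that would need to be written out carefully (or cited) to make the proof complete.
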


\begin{proof} In what follows we show that $G$ is chain transitive by induction on its dimension. As commented in the beginning of the section, we assume w.l.o.g. that $G$ is simply connected.

The one-dimensional case follows as a consequence of the abelian case proved in \cite[Theorem 3.3]{FCASEV}. Let us then assume that the result is true for any nilpotent flow of automorphisms on any solvable Lie group with dimension smaller or equal to $k$ and let $G$ be a simply connected solvable Lie group with $\dim G=k+1$ and $\{\varphi_{\tau}\}_{\tau\in\R}$ a nilpotent flow of automorphisms on $G$.

Since $G$ is solvable, its Lie algebra $\fg$ is also solvable, and hence, its derived series satisfies
$$\fg^0:=\fg\supset \fg^{(1)}\supset\cdots\supset\fg^{(p)}\supset\fg^{(p+1)}=\{0\}, \hspace{.5cm}\mbox{ where }\hspace{.5cm}\fg^{(i)}:=[\fg^{i-1}, \fg^{i-1}], \hspace{.5cm}\mbox{ for }\hspace{.5cm}i\geq 1.$$
Let us denote by $H$ the connected Lie group with Lie algebra $\fg^{(p)}$. The subgroup $H$ is a closed, normal, and abelian Lie group. Moreover, the fact that $\fg^{(i)}$ is invariant by derivations implies that $H$ is $\varphi$-invariant. By the abelian case, it holds that
$$H=\RC_C(\varphi|_H)\subset \RC_C(\varphi).$$
On the other hand, the $\varphi$-invariance of $H$ guarantees the existence of an induced flow of automorphisms $\widehat{\varphi}$ on $G/H$ satisfying $$\forall t\in \R, \hspace{.5cm}\pi\circ\varphi_t=\widehat{\varphi}_t\circ\pi.$$
 Since $\widehat{\varphi}$ is a nilpotent flow of automorphisms on $G/H$, the inductive hypothesis implies that $\RC_C(\widehat{\varphi})=G/H$. By Lemma \ref{quotient}, we conclude that $G\subset \RC_C(\varphi)$, proving the result.
\end{proof}

\subsection{Nilpotent flow of automorphisms on semisimple Lie groups}

In this section, we consider the semisimple case. We separate the analysis into the compact and the noncompact semisimple Lie groups.

For both cases, the fact that the Lie algebra $\fg$ of $G$ is semisimple, implies that $\DC=\ad(X)$ for some $X\in\fg$. Moreover, the flow of 
 automorphisms is given as $\varphi_t=C_{\rme^{tX}}$.

\subsubsection{Compact semisimple Lie groups}

Since $G$ is a compact semisimple group, the Cartan-Killing form 
$$(X, Y)\in\fg\times\fg\mapsto\tr(\ad(X)\ad(Y)),$$
is a non-degenerated and negative-definite bilinear form on $\fg$. In particular, 
$$\langle X, Y\rangle=-\tr(\ad(X)\ad(Y)),$$
is an inner product on $\fg$. Since it is invariant by automorphisms, we get that $\varphi_t=C_{\rme^{tX}}$ is an elliptical flow of automorphisms. By item 2. of Proposition \ref{properties} we conclude that 
$$G=\RC_C\left(\id\right)=\RC_C\left(\id\circ\varphi\right)=\RC_G\left(\varphi\right),$$
where $\id:G\rightarrow G$ is the identity of $G$. In particular, we have the following.

\begin{proposition}
    \label{solvable}
 Any flow of automorphisms on a connected and compact semisimple Lie group is elliptical and hence chain transitive.
\end{proposition}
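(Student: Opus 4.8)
The plan is to make the two remarks preceding the statement into an argument. \textbf{Step 1.} Since $\fg$ is semisimple, every derivation of $\fg$ is inner, so $\DC=\ad(X)$ for some $X\in\fg$ and $\varphi_t=C_{\rme^{tX}}$ for all $t\in\R$; consequently $(d\varphi_t)_e=\Ad(\rme^{tX})$. \textbf{Step 2 (ellipticity).} As $G$ is compact and semisimple, its Cartan--Killing form $\kappa(Y,Z)=\tr(\ad(Y)\ad(Z))$ is negative definite, so $\langle Y,Z\rangle:=-\kappa(Y,Z)$ is an inner product on $\fg$. Because $\kappa$ is invariant under $\mathrm{Aut}(\fg)$, hence under each $\Ad(\rme^{tX})$, the operator $\DC=\ad(X)$ is skew-symmetric with respect to $\langle\cdot,\cdot\rangle$; therefore $\DC$ is semisimple with purely imaginary spectrum, i.e.\ elliptic, and $\varphi$ is an elliptical flow of automorphisms. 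Equivalently, the left-invariant Riemannian metric $\varrho$ determined by $\langle\cdot,\cdot\rangle$ is invariant under every inner automorphism $C_g$, so $\varphi$ is a flow of isometries of $\varrho$ (cf.\ Remark \ref{metricas}).

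\textbf{Step 3 (chain transitivity).} Apply item 2 of Proposition \ref{properties} with $\psi:=\varphi$ (elliptical) and with the trivial flow $\id:=\{\id_G\}_{t\in\R}$ in the role of ``$\varphi$'' there; the commutation hypothesis is automatic and $\id\circ\psi=\varphi$, so $\RC_C(\varphi)=\RC_C(\id\circ\varphi)=\RC_C(\id)$. For the trivial flow an $(U,\tau)$-chain from $x$ to $y$ is just a finite sequence $x=x_0,x_1,\dots,x_n=y$ with $x_{i+1}^{-1}x_i\in U$, and such a sequence exists for all $x,y\in G$ because a connected Lie group is generated by any neighbourhood of $e$; hence $\RC_C(\id)=G$, and in fact the whole of $G$ is a single chain transitive component for $\id$. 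To transfer this to $\varphi$ I would use that $\varphi$ is isometric for $\varrho$ and that $G$ is a compact length space: given $x,y\in G$ and $\varepsilon,\tau>0$, pick $N$ with $N\varepsilon$ exceeding the diameter of $(G,\varrho)$; running the jumps backwards from $x_N=y$ with $\tau_i\equiv\tau$, the set of admissible initial points $x_0$ of such an $(\varepsilon,\tau)$-metric chain of $\varphi$ equals the image under $\varphi_{-N\tau}$ of the $N$-fold $\varepsilon$-thickening of $\{y\}$, which contains $B(y,N\varepsilon)=G$; so it contains $x$, and a chain from $x$ to $y$ exists. Hence $\varphi$ is chain transitive, which together with ellipticity is the claim.

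\textbf{Main point of care.} There is no deep obstacle here; the only step that is not immediate is upgrading the set equality $\RC_C(\varphi)=G$ furnished by Proposition \ref{properties} to the statement that $G$ is a \emph{single} chain transitive component. This is handled by the isometry/compactness argument of Step 3 (or, alternatively, by noting that the chains constructed in the proof of Proposition \ref{properties}(2) transport genuine connecting chains, not merely the recurrence of a point).
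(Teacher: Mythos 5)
Your proof is correct and follows the same route as the paper: ellipticity of $\varphi_t=C_{\rme^{tX}}$ from the negative-definiteness of the Cartan--Killing form on a compact semisimple algebra, then item 2 of Proposition \ref{properties} applied with the trivial flow to get $\RC_C(\varphi)=\RC_C(\id)=G$. Your Step 3 addendum --- upgrading the set equality $\RC_C(\varphi)=G$ to the statement that $G$ is a \emph{single} chain transitive component, via the bi-invariant isometric metric and compactness --- addresses a point the paper's own proof passes over silently, and is a sound and worthwhile refinement rather than a departure.
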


\subsubsection{Noncompact semisimple Lie groups}

Here we consider noncompact, connected semisimple Lie groups.

\begin{proposition}
    \label{semisimple}
    Any nilpotent flow of automorphisms on a noncompact, connected semisimple Lie group is chain transitive.
\end{proposition}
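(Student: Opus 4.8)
The plan is to mimic the strategy used for solvable groups in Proposition \ref{solvable}: reduce to a lower-dimensional situation by projecting onto a homogeneous space $G/H$ for a well-chosen $\varphi$-invariant closed subgroup $H$, using Lemma \ref{quotient} to lift chain transitivity back up. Since $\fg$ is semisimple and $\DC = \ad(X)$ is nilpotent, $X$ is a nilpotent element of $\fg$; by the Jacobson--Morozov theorem $X$ embeds in an $\mathfrak{sl}_2$-triple $\{X, H_0, Y\}$ with $[H_0,X]=2X$. The first step is to exploit the structure of $G$: pick an Iwasawa decomposition $G = KAN$ adapted so that $X$ lies in $\fn$ (a nilpotent element can always be conjugated into the nilradical of a parabolic), so that $\varphi_t = C_{\exp(tX)}$ with $\exp(tX)\in N$. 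The natural candidate for $H$ is a proper $\varphi$-invariant subgroup obtained from an $\ad$-eigenspace decomposition of $\ad(H_0)$ (the grading element of the triple), or alternatively the connected subgroup with Lie algebra $\ker(\ad X)$ or $\mathrm{im}(\ad X)$.

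Concretely, I would proceed as follows. First, verify that the center $Z$ of $G$ is finite (true for semisimple $G$) and, invoking the reduction already made in the excerpt, assume $G$ is simply connected; this guarantees $G^0 = G$ since a nilpotent flow has $\DC_{\HC}=0$, so $\mathfrak{g}^0 = \ker\DC_{\HC} = \fg$, and the claim "chain transitive" is exactly $\RC_C(\varphi)=G$. Second, choose $H$ to be the connected (closed, since semisimple groups have closed connected subgroups corresponding to "algebraic" subalgebras — here one must check $H$ is closed, e.g. by taking $H$ to be the unipotent radical of a parabolic containing $\exp(tX)$, which is automatically closed and simply connected) $\varphi$-invariant normal-in-a-parabolic subgroup with $0 < \dim H < \dim G$. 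Third, apply Lemma \ref{quotient}(2.2): $H_0 = H$ is chain transitive either by induction on dimension (if $H$ is semisimple) or by Proposition \ref{solvable} (if $H$ is solvable — the unipotent radical case), and $\widehat\varphi$ on $G/H$ is a nilpotent flow of automorphisms on a group of strictly smaller dimension, hence chain transitive by induction. Since normal subgroups satisfy conditions (\ref{homo0}) and (\ref{homo}) (as noted in the remark after Lemma \ref{quotient}), the lemma yields chain transitivity of $\varphi$ on $G$. Fourth, handle the base case of the induction: the lowest-dimensional noncompact semisimple group, where one can either reduce directly to $\mathrm{SL}(2,\R)$ (or its cover) and analyze the nilpotent flow $C_{\exp(tX)}$ explicitly, or observe that $X=0$ forces $\varphi=\id$, which is trivially chain transitive.

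The main obstacle I expect is the choice of $H$ together with proving it is \emph{closed} and \emph{proper and nontrivial} in all cases — a nilpotent flow need not be "large", and if $X$ happens to be regular nilpotent the only obvious invariant subalgebras ($\ker\ad X$, $\mathrm{im}\,\ad X$) may not cut the dimension down in a way compatible with the semisimple/solvable dichotomy needed to invoke either the inductive hypothesis or Proposition \ref{solvable}. The cleanest route is probably: let $\mathfrak{p}$ be a parabolic subalgebra whose nilradical $\fn_{\mathfrak p}$ contains $X$ (exists since $X$ is nilpotent and $G$ noncompact so proper parabolics exist), set $H = \exp(\fn_{\mathfrak p})$, which is closed, connected, simply connected, nilpotent, and $\varphi$-invariant (as $\ad(X)\fn_{\mathfrak p}\subset\fn_{\mathfrak p}$); then $H$ is chain transitive by Proposition \ref{solvable}, and $G/H \cong G/P \times P/H$-type data reduces $\widehat\varphi$ to a flow on $P/H$ which is reductive of smaller dimension, so one iterates. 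One must be slightly careful that $G/H$ is genuinely lower-dimensional and that the induced flow is still nilpotent — both follow since $\ad(X)$ restricted to any $\varphi$-invariant quotient remains nilpotent and $\dim H\ge 1$ whenever a proper parabolic exists, i.e. whenever $G$ is noncompact, which is exactly our hypothesis.
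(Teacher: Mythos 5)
Your reduction strategy has a genuine gap at its core. The subgroup $H$ you settle on (the unipotent radical $\exp(\fn_{\fp})$ of a parabolic containing $\exp(tX)$, or more generally a subgroup that is only ``normal in a parabolic'') is \emph{not} normal in $G$; indeed, for a simple $G$ there is no proper nontrivial connected normal subgroup at all, so no choice of $H$ can make your scheme work as stated. Consequently $G/H$ is merely a homogeneous space, not a Lie group, and the induced flow $\widehat\varphi$ is not a flow of automorphisms of a lower-dimensional group --- so your inductive hypothesis cannot be applied to it, and your appeal to ``normal subgroups satisfy (\ref{homo0}) and (\ref{homo})'' is vacuous here. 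Moreover, condition (\ref{homo}) is not automatic for your $G/H$: it fibers over the flag manifold $G/P$ with noncompact reductive fiber $P/H\cong MA$, so neither of the two sufficient conditions of Proposition \ref{property} (normality, or compactness of the quotient) is available. The closing suggestion to iterate through ``$G/P\times P/H$-type data'' is not something Lemma \ref{quotient} can process.

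The missing idea is that chain transitivity of the induced flow on the homogeneous space cannot come from an induction on dimension; it requires an external dynamical input. The paper quotients by the full \emph{minimal parabolic subgroup} $P$ with $X\in\fp$ (which exists for nilpotent $X$ by \cite[Theorem 7.2]{hell} --- no Jacobson--Morozov triple is needed). Then $G/P$ is a compact flag manifold, so condition (\ref{homo}) holds by Proposition \ref{property}; the induced flow is the flow of the one-parameter group $\rme^{tX}$ on $G/P$, whose chain transitivity for nilpotent $X$ is the theorem of Ferraiol--Patr\~ao--Seco \cite[Theorem 4.2]{Ferraiol}; and $P_0$ is a chain transitive subset by the solvable case. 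Your proposal contains no substitute for the flag-manifold result, which is the real content of the semisimple noncompact case.
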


\begin{proof}
Let $G$ be a noncompact, connected semisimple Lie group with Lie algebra $\fg$ and $\varphi$ a flow of automorphisms on $G$. As previously, there exists $X\in\fg$ such that $\varphi_t=C_{\rme^{tX}}$. Under the assumption that $\varphi$ is nilpotent, we get that $X$ is a nilpotent element. Therefore, there exists a minimal parabolic subalgebra $\mathfrak{p}$ with $X\in\mathfrak{p}$ (see \cite[Theorem 7.2]{hell}). If $P$ is the associated minimal parabolic subgroup of $\fp$, then 
$$\rme^{tX}\in P_0\hspace{.5cm}\implies\hspace{.5cm}P \mbox{ is }\varphi\mbox{-invariant}.$$ 
In particular, we can consider the induced flow $\widetilde{\varphi}$ on the flag manifold $G/P$. Since,
$$\widetilde{\varphi}_t(xP)=\rme^{tX}g\rme^{-tX}P=\rme^{tX}xP,$$
we have that $\widetilde{\varphi}$ coincides with the flow induced by one parameter subgroup $\{\rme^{tX}, t\in\R\}$, which by  \cite[Theorem 4.2]{Ferraiol} is chain transitive.  On the other hand, the fact that $P$ is a solvable subgroup implies by Proposition \ref{solvable} that $P_0$ is a chain transitive subset.

Since the flag manifold $G/P$ is compact, relation (\ref{homo}) is satisfied (see Proposition \ref{property} (i)), and hence, by Lemma \ref{quotient}, we conclude that $\varphi$ is chain transitive, proving the result.
\end{proof}




\subsection{The general case}

In this section, we conclude the proof of our main result by showing that nilpotent flows of automorphisms on connected Lie groups are chain transitive.

\begin{proposition}
Any nilpotent flow of automorphisms on a connected Lie group is chain transitive.
\end{proposition}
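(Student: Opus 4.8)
The plan is to reduce the general connected Lie group $G$ to the solvable and semisimple cases already settled in Propositions \ref{solvable} and \ref{semisimple}, using the Levi decomposition together with the lifting/projection machinery of Lemma \ref{quotient}. As noted at the beginning of the section, we may assume $G$ is simply connected, so $G = R \rtimes S$ where $R$ is the (simply connected) solvable radical and $S$ is a semisimple Levi factor. The radical $R$ is a characteristic subgroup of $G$, hence $\varphi$-invariant (its Lie algebra is the radical of $\fg$, which is preserved by every derivation), and it is closed because $G$ is simply connected. So we obtain an induced nilpotent flow $\widehat\varphi$ on the homogeneous space $G/R \cong S$, which is a connected semisimple Lie group.

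First I would deal with $R$ itself: by Proposition \ref{solvable}, the restriction $\varphi|_R$ is chain transitive, so $R = \RC_C(\varphi|_R) \subset \RC_C(\varphi)$; in particular $R_0 = R$ is a chain transitive subset of $G$. Next, the induced flow $\widehat\varphi$ on $G/R$ is a nilpotent flow of automorphisms on a connected semisimple Lie group. Here I would split according to whether $S$ is compact or not: if $S$ is compact, Proposition \ref{solvable} (the compact semisimple statement) gives that $\widehat\varphi$ is chain transitive; if $S$ is noncompact, Proposition \ref{semisimple} applies directly — strictly speaking one should check that the nilpotency of $\DC$ descends to the induced derivation on $\fg/\fr$, which is immediate since the induced derivation is a quotient of a nilpotent one. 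Either way $\widehat\varphi$ is chain transitive.

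Now I invoke Lemma \ref{quotient}(2.2): it requires that the pair $(R, G)$ satisfy condition (\ref{homo}), that $\widehat\varphi$ be chain transitive, and that $R_0$ be a chain transitive subset. The last two are established above. For condition (\ref{homo}), since $R$ is a \emph{normal} closed subgroup, the canonical projection $\pi : G \to G/R$ is a Lie group homomorphism, and the remark following Lemma \ref{quotient} points out that (\ref{homo}) holds for normal subgroups — this is where I would cite Proposition \ref{property} (the appendix result invoked for the normal case, as in the noncompact semisimple proof). With all hypotheses of Lemma \ref{quotient}(2.2) in place, we conclude that $\varphi$ is chain transitive, which is the assertion; combined with the reductions at the start of Section 4 this also completes the proof of Theorem \ref{main}.

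The main obstacle is verifying condition (\ref{homo}) for the pair $(R,G)$ — that is, that $\varepsilon$-balls downstairs are covered by images of fixed neighborhoods $Ux$ uniformly in $x$. For a normal subgroup this is true but, as the paper itself flags, not entirely straightforward; it hinges on a uniformity statement across all cosets, which is exactly the content of the appendix Proposition \ref{property}. Assuming that result, the rest of the argument is a routine assembly of pieces already proved. A secondary point worth a sentence is the closedness of $R$: in a simply connected Lie group the analytic subgroup corresponding to an ideal is automatically closed, so no extra hypothesis is needed.
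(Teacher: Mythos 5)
Your proposal is correct and follows essentially the same route as the paper: restrict to the radical $R$ and invoke the solvable case, pass to the semisimple quotient $G/R$ and invoke the semisimple case, then lift via Lemma \ref{quotient}(2.2) using that the normal pair $(R,G)$ satisfies condition (\ref{homo}) by the appendix Proposition \ref{property}. The extra points you flag (closedness of $R$, nilpotency of the induced derivation, the compact/noncompact split) are all correct and merely make explicit what the paper leaves implicit.
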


\begin{proof} Let $\varphi$ be a nilpotent flow of automorphisms on $G$ and denote by $R$ the solvable radical of $G$. Since $R$ is normal and $\varphi$-invariant, the induced flow $\widehat{\varphi}$ on $G/R$ determined by the relation
    $$\forall t\in\R, \hspace{.5cm}\pi\circ\widehat{\varphi}_t=\varphi_t\circ\pi,$$
is a nilpotent flow of automorphisms. Since $G/R$ is a connected semisimple Lie group, the previous section assures that $\widehat{\varphi}$ is chain transitive. On the other hand, $R$ is solvable, and the restriction $\varphi|_R$ is nilpotent, implying by Proposition \ref{solvable} that $\varphi|_R$ is chain transitive, that is, $R$ is a chain transitive subset. To conclude, the fact that $R$ is normal implies by Proposition \ref{properties} that $R$ and $G$ satisfy property (\ref{homo}), and hence, $\varphi$ is chain transitive, as desired.
\end{proof}

\begin{remark}
    Let us note that Theorem \ref{main} implies that flow of automorphisms on connected Lie groups satisfy the restriction property, that is,
    $$\RC_C(\varphi)=\RC_C\left(\varphi|_{\RC_C(\varphi)}\right).$$
\end{remark}

\appendix

\section{Homogeneous spaces}

Let $(H, G)$ be a pair of connected Lie groups where $H\subset G$ is a closed subgroup. In this section we are interested in the pairs satisfying the property (\ref{homo}), that is, pairs with the property that
$$\forall U\in\VC_G, \;\exists\varepsilon>0; \hspace{.5cm}\forall x\in G, \hspace{.5cm}B(\pi(x), \varepsilon)\subset \pi(Ux),$$
where $\pi:G\rightarrow G/H$ is the canonical projection and the metric considered in the homogeneous space is compatible with the topology of $G/H$.

\begin{proposition}
\label{property}
    The following pair of groups satisfy property (\ref{homo}):

    \begin{itemize}
        \item[1.] $(H, G)$ with $H$ normal on $G$; 
    
        \item[2.] $(H, G)$ such that $G/H$ is compact;

           \end{itemize}

\end{proposition}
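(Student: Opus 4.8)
The plan is to prove Proposition~\ref{property} by treating the two cases separately, since they rely on entirely different mechanisms. Throughout, fix a left-invariant metric $\varrho$ on $G$ and endow $G/H$ with the induced quotient metric $d(\pi(x), \pi(y)) := \inf_{h, h'\in H}\varrho(xh, yh')$, which is compatible with the quotient topology. The goal in both cases is: given $U\in\VC_G$, produce $\varepsilon>0$ such that every point within distance $\varepsilon$ of $\pi(x)$ lies in $\pi(Ux)$, and crucially this $\varepsilon$ must be \emph{uniform} in $x$.

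For case~1, when $H$ is normal, the projection $\pi$ is a group homomorphism and $G/H$ carries the quotient Lie group structure. The key simplification is that left translation on $G/H$ by $\pi(x)$ is an isometry for the natural left-invariant metric on the quotient group, and $\pi$ intertwines left translations: $\pi(Ux) = \pi(U)\pi(x)$. Wait — more carefully, since $H$ is normal, $\pi(Ux) = \pi(U)\pi(x)$ only when we write things as $\pi(xU) = \pi(x)\pi(U)$; for the right-translate $Ux$ we get $\pi(Ux)=\pi(U)\pi(x)$. Either way, the problem reduces to the single point $x = e$: it suffices to find $\varepsilon>0$ with $B(\pi(e), \varepsilon)\subset \pi(U)$, and then translate. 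That $\pi(U)$ is a neighborhood of $\pi(e)$ in $G/H$ is immediate because $\pi$ is an open map (a standard fact for quotients by closed normal — indeed any closed — subgroups). So in this case the proof is short: openness of $\pi$ gives an $\varepsilon$-ball inside $\pi(U)$ at the identity coset, and isometric invariance of the quotient metric under translation by $\pi(x)$ transports this uniformly to every $\pi(x)$.

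For case~2, when $G/H$ is compact, there is no homogeneity to exploit, so the argument must be a compactness/covering argument. First I would note that $\pi$ is still open, so for each fixed $y\in G$ the set $\pi(Uy)$ is an open neighborhood of $\pi(y)$, hence contains some ball $B(\pi(y), 2\varepsilon_y)$. Here is where one must be careful: I actually want to use $U$ "centered" appropriately. Pick $V\in\VC_G$ symmetric with $V^2\subset U$. For each $y\in G$, $\pi(Vy)$ contains a ball $B(\pi(y), \varepsilon_y)$ for some $\varepsilon_y>0$. The sets $\{\,\text{interior of }\pi(Vy)\,\}_{y\in G}$ cover the compact space $G/H$; since $\pi(Vy)$ depends only on the coset $\pi(y)$ (as $V y h$ and $V y$ have the same image when... no, that's false — but $\pi(Vyh) = \pi(Vy)$ does hold because $\pi(Vyh)=\{\pi(vyh):v\in V\}$ and $\pi(yh)=\pi(y)$, yet $\pi(vyh)\ne\pi(vy)$ in general since $H$ is not normal), so I should instead just take a finite subcover $\pi(Vy_1),\dots,\pi(Vy_m)$ of $G/H$ indexed by finitely many points $y_1,\dots,y_m\in G$, and set $\varepsilon := \min_i \varepsilon_{y_i} > 0$. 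Now given any $x\in G$, the coset $\pi(x)$ lies in $B(\pi(y_i),\varepsilon_{y_i})\subset \pi(Vy_i)$ for some $i$, so $\pi(x) = \pi(v y_i)$ for some $v\in V$, i.e. $x \in V y_i H$, say $x = v y_i h$. Then $B(\pi(x),\varepsilon)\subset B(\pi(y_i), \varepsilon_{y_i} + d(\pi(x),\pi(y_i)))$; this is not quite $B(\pi(y_i),\varepsilon_{y_i})$, so the triangle-inequality bookkeeping needs the radii chosen with a factor of $2$: choose $\varepsilon_y$ so that $B(\pi(y),2\varepsilon_y)\subset\pi(Vy)$ and take $\varepsilon=\min_i\varepsilon_{y_i}$; then for $z\in B(\pi(x),\varepsilon)$ we get $d(z,\pi(y_i))\le d(z,\pi(x))+d(\pi(x),\pi(y_i))<\varepsilon+\varepsilon_{y_i}\le 2\varepsilon_{y_i}$, so $z\in\pi(Vy_i)$, say $z=\pi(v'y_i)$ with $v'\in V$. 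Finally $z = \pi(v'y_i) = \pi(v' v^{-1} \cdot v y_i)$ and since $x = vy_i h$ we have $vy_i = xh^{-1}$, so $z = \pi(v' v^{-1} x h^{-1}) = \pi(v'v^{-1}x) \in \pi(V V^{-1} x)\subset\pi(Ux)$, using $VV^{-1}=V^2\subset U$. This establishes property~(\ref{homo}).

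The main obstacle is the non-homogeneity in case~2 combined with the fact that $H$ is not normal there, which means one cannot reduce to a single base point and must juggle the cosets carefully — in particular one must translate the "local" inclusion $\pi(Vy_i)\supset B(\pi(y_i),2\varepsilon_{y_i})$ back to $x$ by multiplying on the left by elements of $VV^{-1}$, and verify that the ambiguity $h\in H$ introduced by writing $x=vy_ih$ cancels out (it does, because it only shifts $y_i$ inside its own $H$-coset, leaving $\pi$ unchanged). Case~1 is essentially formal once one invokes openness of the canonical projection and translation-invariance of the quotient metric.
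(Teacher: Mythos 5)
Your Case~1 has a genuine gap coming from a left/right mismatch. You fix at the outset the quotient metric induced by a \emph{left}-invariant metric $\varrho$ on $G$; this quotient metric is left-invariant on the Lie group $G/H$, so $B(\pi(x),\varepsilon)=\pi(x)B(\pi(e),\varepsilon)$. But the set you must land in is $\pi(Ux)=\pi(U)\pi(x)$, a \emph{right} translate of $\pi(U)$. To pass from $B(\pi(e),\varepsilon)\subset\pi(U)$ to $B(\pi(x),\varepsilon)\subset\pi(U)\pi(x)$ you need right-invariance of the metric on $G/H$, which your chosen metric does not have; with a left-invariant metric you would instead need $B(\pi(e),\varepsilon)\subset\pi(x^{-1}Ux)$ uniformly in $x$, and this fails whenever conjugation distorts neighborhoods unboundedly. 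Concretely, take $H=\{e\}$ (normal) and $G$ the affine group of the line with a left-invariant metric: conjugates of a small ball at $e$ are unbounded, so no single $\varepsilon$ works. The statement (\ref{homo}) leaves the compatible metric on $G/H$ free, and the repair is exactly the paper's choice: since $G/H$ is a Lie group, equip it with a \emph{right}-invariant metric, so that $B(\pi(x),\varepsilon)=B(\pi(e),\varepsilon)\pi(x)\subset\pi(U)\pi(x)=\pi(Ux)$. As written, your translation step does not go through.

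Your Case~2 is correct and takes a genuinely different route from the paper. The paper writes $G=CH$ with $C$ compact, extracts a uniform conjugation bound $xVx^{-1}\subset U$ for $x\in C$, and concludes by contradiction along sequences $\varepsilon_n\to 0$; you instead give a direct covering argument that avoids conjugation entirely: cover the compact $G/H$ by finitely many balls $B(\pi(y_i),\varepsilon_{y_i})$ with $B(\pi(y_i),2\varepsilon_{y_i})\subset\pi(Vy_i)$, and for $z\in B(\pi(x),\varepsilon)$ write $x=vy_ih$ and $z=\pi(v'y_i)=\pi(v'v^{-1}xh^{-1})=\pi(v'v^{-1}x)\in\pi(V^2x)\subset\pi(Ux)$ --- the $H$-ambiguity cancels because right multiplication by $h^{-1}$ never changes a coset. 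This is clean and arguably more transparent than the paper's argument. Two small points: the finite subcover must be taken of the balls $B(\pi(y),\varepsilon_y)$ (not of the sets $\pi(Vy)$), since the triangle-inequality step uses $d(\pi(x),\pi(y_i))<\varepsilon_{y_i}$; and your parenthetical claim that $\pi(vyh)\neq\pi(vy)$ in general is false ($vyhH=vyH$ always), though this does not affect the argument.
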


\begin{proof} 1. If $H$ is a normal subgroup, the homogeneous space $G/H$ is a Lie group and we can consider a right-invariant metric on $G/H$. Since, the canonical projection $\pi:G\rightarrow G/H$ is an open map, for any $U\in\VC_G$, there exists $\varepsilon>0$ such that $B(\pi(e), \varepsilon)\subset \pi(U)$. Hence, 
$$\pi^{-1}(B(\pi(e), \varepsilon))\subset UH\hspace{.5cm}\implies\hspace{.5cm}\forall x\in G, \hspace{.5cm}\pi^{-1}(B(\pi(e), \varepsilon))x\subset UHx=UxH,$$
where for the last equality we used that $H$ is a normal subgroup. On the other hand, the fact that $\pi$ is a surjective homomorphism and the right-invariance of the metric on $G/H$ implies that
$$\pi\left(\pi^{-1}(B(\pi(e), \varepsilon))x\right)=\pi\left(\pi^{-1}(B(\pi(e), \varepsilon))\right)\pi(x)=B(\pi(e), \varepsilon)\pi(x)=B(\pi(x), \varepsilon
),$$
implying that 
$$\forall x\in G, \hspace{.5cm}B(\pi(x), \varepsilon
)\subset \pi(UxH)=\pi(Ux),$$
as desired.

\bigskip

2. Following \cite[Lemma 11.7]{SM2}, the compactness of $G/H$ implies the existence of a compact $C\subset G$ with $e\in\inner C$ and such that $G=CH$. Let $U\in \VC_{G}$ and consider $c\in C$. By continuity, there exist $V_c\in \VC_G$ and $W_c$ a neighborhood of $c$ such that 
\begin{equation}
    \label{eq1}
\forall x\in W_c, \hspace{.5cm}xV_cx^{-1}\subset U.
\end{equation}
Since $C\subset\cup_{c\in C}W_c$, the compactness of $C$ implies the existence of $c_1, \ldots, c_n\in C$ such that 
$C\subset \cup_{i=1}^nW_{c_i}$. Let $V_{c_i}\in \VC_G$ the neighborhood that, together with $W_{c_i}$, satisfy (\ref{eq1}), and define $V:=\cap_{i=1}^n V_{c_i}$. 

Then, for any $x\in C$ there exists $i\in\{1, \ldots, n\}$ such that $x\in W_{c_i}$. Therefore,
$$xVx^{-1}\subset xV_{c_i}x^{-1}\subset U.$$
Let us assume that the assertion does not holds. Hence, there exists $U\in\VC_G$, $x_n, y_n\in G$ and $\varepsilon_n\rightarrow 0$ such that 
$$\pi(y_n)\in B(\pi(x_n), \varepsilon_n)\hspace{.5cm}\mbox{ and }\hspace{.5cm}\pi(y_n)\notin \pi(Ux_n).$$

Since $\pi(Hg)=\pi(g)$ for any $g\in G$, we can assume w.l.o.g. that $x_n, y_n\in C$. By considering subsequences if necessary, we can assume that $x_n\rightarrow x$ and $y_n\rightarrow y$. Then, 
$$\pi(y_n)\in B(\pi(x_n), \varepsilon_n)\hspace{.5cm}\implies\hspace{.5cm} \pi(x)=\pi(y)\hspace{.5cm}\iff\hspace{.5cm}x^{-1}y\in H.$$
On the other hand, if $V\in \VC_G$ satisfies $xVx^{-1}\subset U$ for all $x\in C$, then 
$$\pi(y_n)\notin \pi(x_nU)\hspace{.5cm}\iff\hspace{.5cm} \pi(x_n^{-1}y_n)\notin \pi(x_n^{-1}Ux_n)\hspace{.5cm}\implies\hspace{.5cm} \pi(x_n^{-1}y_n)\notin \pi(V).$$
Since $\pi(V)$ is a neighborhood of $\pi(e)$ in $G/H$ we conclude that  
$$\pi(x_n^{-1}y_n)\rightarrow \pi(x^{-1}y)\notin \inner\pi(V).$$
However, $x^{-1}y\in H$ implies that $\pi(x^{-1}y)=\pi(e)\in\inner\pi(V)$, which is a contradiction.
\bigskip


\end{proof}


    


\begin{thebibliography}{99}

	
	\bibitem {DSAyGZ}
	\newblock V. Ayala, A. Da Silva and G. Zsigmond, 
	\newblock \emph{Control sets of linear systems on Lie groups.} 
	\newblock Nonlinear Differential Equations and Applications - NoDEA 24 No 8 (2017), 1 - 15.
		
	\bibitem {DSAyPH}
	\newblock V. Ayala, A. Da Silva and P. Jouan,
	\newblock \emph{Jordan Decompositions and the recurrent set of flows of automorphisms},
	\newblock Discrete and Continuous Dynamical Systems, 41 No 4 (2021), 1534-1559.
	
	

	

 
	
	\bibitem {Conley}
	\newblock C. Conley, 
	\newblock \emph{Isolated invariant sets and the Morse index}, 
	\newblock CBMS Publ., vol. 38, Amer. Math.
Soc, Providence, Rl, 1978.

 		
	\bibitem{DS}
	\newblock A. Da Silva, 
	\newblock \emph{Controllability of linear systems on solvable Lie groups},
	\newblock SIAM Journal on Control and Optimization 54 No 1 (2016), 372-390.
	
	

	\bibitem{FCASEV} 
	\newblock F. Colonius, A.J. Santana and E. C. Viscovini, 
	\newblock \emph{Chain Controllability of Linear Control Systems.}
	\newblock SIAM Journal on Control and Optimization 62 No 4 (2024), 2387-2411
    
    
    

  
	\bibitem{Ferraiol} 
	\newblock T. Ferraiol, Mauro Patrão, Lucas Seco, 
	\newblock \emph{Jordan decomposition and dynamics  on flag manifolds.}
	\newblock Discrete and Continuous Dynamical Systems, 2010, 26(3): 923-947. 

\bibitem{hell} 
\newblock S. Helgason,
\newblock \emph{Diferential Geometry, Lie Groups and
Symmetric Spaces,}
\newblock Academic Press 1978.
    

	\bibitem{Hurley} 
	\newblock M. Hurley, 
	\newblock \emph{Chain recurrence and attraction in non-compact spaces.}
	\newblock Ergodic Theory and Dynamical Systems, 1991, 11: 709-729.

    	\bibitem{Hurley1} 
	\newblock M. Hurley, 
	\newblock \emph{Noncompact chain recurrence and attraction,}
	\newblock Proceeding of the american mathematical society, 1992, 115 (4): 1139-1148.
 
	\bibitem {JPh1}
	\newblock P. Jouan,
	\newblock \emph{Equivalence of Control Systems with Linear Systems on Lie Groups and Homogeneous Spaces.}
	\newblock ESAIM: Control Optimization and Calculus of Variations, 16 (2010) 956-973.


 	
	
	
	

	
	
	\bibitem{SM1}
	\newblock L. A. B. San Martin, 
	\newblock \emph{Algebras de Lie}, 
	\newblock Second Edition, Editora Unicamp, (2010).


\bibitem{SM2}
	\newblock L. A. B. San Martin, 
	\newblock \emph{Lie groups}, 
	\newblock Latin American Mathematics Series; Springer: Berlin/Heidelberg, Germany, (2021).
    
\end{thebibliography}
\end{document}